\newtheorem{thm}{Theorem}[section]
\newtheorem{cor}[thm]{Corollary}
\newtheorem{lem}[thm]{Lemma}
\newtheorem{prop}[thm]{Proposition}
\newtheorem{conj}[thm]{Conjecture}
\theoremstyle{definition}
\theoremstyle{remark}
\newtheorem{rem}{Remark}[section]
 \DeclareMathOperator\Ai{{Ai}}
 \DeclareMathOperator\re{{Re}}
\numberwithin{equation}{section}
\begin{document}

\title{Location of Poles for the Hastings-McLeod Solution
to the Second Painlev\'{e} Equation}
\gdef\shorttitle{Pole Location for the Hastings-McLeod Solution}

\author{Min Huang\footnotemark[1], ~Shuai-Xia Xu\footnotemark[2] ~and Lun Zhang\footnotemark[3]}

\date{}

\maketitle
\renewcommand{\thefootnote}{\fnsymbol{footnote}}
\footnotetext[1]{Department of Mathematics, City University of Hong Kong, Tat Chee
Avenue, Kowloon, Hong Kong. E-mail: mihuang\symbol{'100}cityu.edu.hk}
\footnotetext[2]{Institut Franco-Chinois de l'Energie Nucl\'{e}aire, Sun Yat-sen University,
Guangzhou 510275, China. E-mail: xushx3\symbol{'100}mail.sysu.edu.cn}
\footnotetext[3] {School of Mathematical Sciences and Shanghai Key Laboratory for Contemporary Applied Mathematics, Fudan University, Shanghai 200433, China. E-mail: lunzhang\symbol{'100}fudan.edu.cn }
\begin{abstract}
We show that the well-known Hastings-McLeod solution to the second Painlev\'{e} equation is pole-free in the region $\arg x \in [-\frac{\pi}{3},\frac{\pi}{3}]\cup [\frac{2\pi}{3},\frac{ 4 \pi}{3}]$, which proves an important special case of a general conjecture concerning pole distributions of Painlev\'{e} transcedents proposed by Novokshenov. Our strategy is to construct explicit quasi-solutions approximating the Hastings-McLeod solution in different regions of the complex plane, and estimate the errors rigorously. The main idea is very similar to the one used to prove Dubrovin's conjecture for the first Painlev\'{e} equation, but there are various technical improvements.

{\bf Keywords:} Hastings-McLeod solution, Painlev\'{e} II equation, location of poles, quasi-solutions

{\bf Mathematics Subject Classification (2000):} 30D35, 30E10, 33E17, 41A10

\end{abstract}

\section{Introduction}
Painlev\'{e} equations are six second order nonlinear ordinary differential equations first studied by Painlev\'{e} and his colleagues around 1900. They are well known for the so-called Painlev\'{e} property, i.e., the only movable singularities of their solutions are (finite order) poles; see \cite[\S 32.2]{DLMF}. Here `movable' means the location of the singularities (which in general can be poles, essential singularities or branch points) of the solutions depend on the constants of integration associated with the initial or boundary conditions of the differential equations. The solutions of these equations, often called the Painlev\'{e} transcendents \cite{DLMF}, in general cannot be represented in terms of elementary functions or known classical special functions. They play important roles in both pure and applied mathematics, and are widely thought of as the nonlinear counterparts of the classical special functions.

For the first two Painlev\'{e} equations
\begin{align}
\textrm{PI} \qquad &y''=6y^2+x,  \nonumber \\
\textrm{PII} \qquad &y''=2y^{3}+xy+\alpha, \label{eq:p2a}
\end{align}
all solutions are meromorphic in the complex plane with $x=\infty$ being the only essential singularity. The locations of the movable poles for the Painlev\'{e} transcendents are crucial for understanding a number of problems arising from mathematical physics; cf. \cite{BT,DGK,Masoero-a,Masoero-b}. In the pioneering works \cite{Boutroux-a,Boutroux-b}, Boutroux established the ``deformed'' elliptic function approximations in appropriate sectors near infinity, which leads to the degeneration of lattices of poles along the critical rays
\begin{equation}\label{def:gammak}
\Gamma_k:=\left\{x ~\Big{|}~ \arg x=\frac{2 k \pi  }{N} \right\}, \qquad k=0,1,\ldots, N-1,
\end{equation}
where
\begin{equation*}
N=\left\{
    \begin{array}{ll}
      5, & \hbox{for PI,} \\
      6, & \hbox{for PII.}
    \end{array}
  \right.
\end{equation*}
This means that the poles tend to align themselves along certain smooth curves which tend to one of the rays $\Gamma_k$ near infinity. Furthermore, Boutroux also showed the existence of solutions which have no lines of poles
near infinity near $n$ ($n=1,2,3$) of the critical rays $\Gamma_k$, which are called $n$-truncated solutions.

An interesting feature of the $2$- or $3$-truncated solutions is, as confirmed by numerical studies in \cite{FW1,FW2,Novo09}, that the distributions of poles near infinity characterize the global behavior of the poles.
More precisely, let $\Xi_k$ be the sector bounded by two consecutive critical rays:
\begin{equation*}
\Xi_k:=\left\{x ~\Big{|}~ \frac{2 k \pi }{N}<\arg x<\frac{2 (k+1) \pi }{N} \right\}, \qquad k=0,1,\ldots, N-1.
\end{equation*}
The following conjecture was made in \cite{Novo14} by Novokshenov:
\begin{conj}\label{conj}
If the $2$- or $3$-truncated solution of Painlev\'{e} equation has no pole at infinity in a sector $\Xi_k$, then it has no poles in the whole sector $\Xi_k$.
\end{conj}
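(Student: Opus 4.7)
The plan is to treat the Hastings-McLeod (H-M) solution as the prototypical $2$-truncated PII transcendent: $\alpha=0$, $y(x)\sim\Ai(x)$ as $x\to+\infty$, and $y(x)\sim\sqrt{-x/2}$ as $x\to-\infty$. These asymptotics already guarantee the absence of poles along the two critical rays $\Gamma_0$ and $\Gamma_3$ (the positive and negative real axes), and the conjecture then predicts pole-freeness throughout the two sectors adjacent to those rays, i.e.\ $\arg x\in[-\pi/3,\pi/3]\cup[2\pi/3,4\pi/3]$. I would attack this by constructing, on each of the two closed sectors, an explicit quasi-solution $y_0$ whose difference from the true H-M solution is controlled by a contraction-mapping argument, following the scheme used for Dubrovin's conjecture in PI but tailored to the two distinct asymptotic regimes that PII forces on H-M.

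For the right sector $\arg x\in[-\pi/3,\pi/3]$ the solution is exponentially small, so a natural choice is a truncation of the Airy-type expansion
\[
y_0(x)\;=\;\Ai(x)+\sum_{k=1}^{M}c_k(x)\,\Ai(x)^{2k+1},
\]
where the coefficients $c_k(x)$ are rational in $x$, determined recursively by matching PII order by order. For the left sector $\arg x\in[2\pi/3,4\pi/3]$ the solution grows algebraically, and the natural choice is a truncation of the formal expansion
\[
y_0(x)\;=\;\sqrt{-x/2}\,\Bigl(1+\sum_{k=1}^{M}\frac{a_k}{(-x)^{3k}}\Bigr),
\]
with the $a_k$ again forced by PII. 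A direct computation shows that in each case the residual $E(x):=y_0''(x)-2y_0^3(x)-xy_0(x)$ is uniformly small throughout the corresponding sector---exponentially small on the right and algebraically small of order $(-x)^{-3(M+1)/2-1/2}$ on the left---and by taking $M$ sufficiently large this smallness survives arbitrarily close to the critical rays bounding each sector.

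To convert ``small residual'' into ``small error'', write $y=y_0+r$, so that $r$ satisfies
\[
r''-\bigl(6y_0^2+x\bigr)r\;=\;6y_0 r^2+2r^3+E.
\]
The key analytic ingredient is an explicit Green's function $G(x,s)$ for the linearized operator $L:=\partial_x^2-(6y_0^2+x)$, built from two linearly independent solutions of $Lu=0$ that decay along suitable contours in the sector (Airy-type on the right, WKB exponentials driven by the $\sqrt{-x/2}$ phase on the left). Reformulating as the integral equation
\[
r(x)\;=\;\int G(x,s)\bigl[6y_0(s)r(s)^2+2r(s)^3+E(s)\bigr]\,ds
\]
on a Banach space of holomorphic functions along a contour reaching infinity in the sector, I would verify that the associated nonlinear map is a contraction on a small ball, exploiting the smallness of $E$ and the quadratic vanishing of the nonlinearity at $r=0$. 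The unique fixed point must coincide with $y-y_0$ by uniqueness of H-M, so $y$ is holomorphic along each such contour; sweeping the contour across the sector then establishes holomorphicity, and hence pole-freeness, throughout.

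The main obstacle lies in the behavior near the critical rays $\arg x=\pm\pi/3$ and $\arg x=2\pi/3,4\pi/3$, where the quasi-solutions degenerate: on the right the Airy expansion undergoes a Stokes transition, while on the left the algebraic series ceases to dominate the elliptic-function oscillations characteristic of generic PII solutions. Maintaining uniform control of $E$, of the Green's function $G$, and of the contraction constant all the way up to the boundary of each sector is the delicate step, and is precisely where the ``various technical improvements'' alluded to in the abstract over the PI treatment will be needed---both in sharpening the numerical constants and in adjusting the integration contours as $\arg x$ approaches the critical value.
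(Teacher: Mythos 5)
Your overall strategy is in the right spirit: both you and the paper approximate the Hastings-McLeod solution by explicit quasi-solutions and control the remainder by an integral-equation/contraction (or Gr\"onwall-type) argument. But there are two concrete gaps that would cause your plan to fail as stated.

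First, and most seriously, your plan covers only a neighborhood of infinity in each sector. No truncated asymptotic series, however long, approximates $y_{HM}$ well once $|x|$ is bounded --- the residual $E$ is small in powers of $|x|^{-1}$ but not uniformly as $|x|\to0$. ``Taking $M$ large'' controls the angular boundary issue you mention, not the small-$|x|$ issue. The paper therefore splits the sector $\arg x\in[2\pi/3,\pi]$ into a far region $\Omega_0$ (handled by asymptotics) and a finite region $\Omega_2$ (handled by a degree-$15$ polynomial quasi-solution $y_b$ fitted numerically, with the error controlled by a Gr\"onwall-type comparison lemma rather than a contraction). This in turn forces a separate analysis along $[0,+\infty)$ (Sections 5--6 of the paper) just to pin down $y_{HM}(0)$ and $y_{HM}'(0)$ to the accuracy needed to seed the finite-region estimate. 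None of this appears in your proposal, and without it the interior of each sector is not covered.

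Second, your left-sector quasi-solution $y_0=\sqrt{-x/2}\bigl(1+\sum a_k(-x)^{-3k}\bigr)$ is the \emph{zero-Stokes-constant} formal solution, but $y_{HM}$ has a nonzero quasi-linear Stokes multiplier $c_-\neq0$. Along the boundary ray $\arg x=2\pi/3$ the exponential $e^{-\frac{2\sqrt2}{3}(-x)^{3/2}}$ has modulus one, so the Stokes term $c_-(-x)^{-1/4}e^{-\frac{2\sqrt2}{3}(-x)^{3/2}}$ is of order $|x|^{-1/4}$ there --- much larger than the first algebraic correction $\sim|x|^{-5/2}$. A purely algebraic quasi-solution is therefore not close to $y_{HM}$ on the boundary of the sector, and your residual-based contraction argument will not close there. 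The paper fixes this by decomposing $h_{HM}=h_p+h_e$ and explicitly including an exponential quasi-solution $h_a$ (a short exponential sum in $e^{-t}$) before estimating the remainder. You gesture at this difficulty (``elliptic-function oscillations'') but do not incorporate it into the construction, and as written the construction would break precisely on the boundary rays that the theorem is required to include.

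As a smaller remark, the paper does not actually re-prove the right sector $\arg x\in[-\pi/3,\pi/3]$: it cites Bertola's result and only indicates that the same machinery would apply there. That does not affect the soundness of your plan, but it is a difference in scope.
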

For the $3$-truncated solutions of PI, a special case of this conjecture is known as Dubrovin's conjecture, which appeared in \cite{DGK} with connections to the critical behavior of the nonlinear Schr\"{o}dinger equation. It was proved recently in \cite{dub} with a technique developed in \cite{lpm}; see also \cite{Kap,Masoero-a,Masoero-b} for partial results.

In this paper, we will further improve the technique in \cite{dub} (see also a recent work \cite{AT} for other improvements of \cite{dub}) and give an analytic proof of Conjecture \ref{conj} in the context of a special $2$-truncated solution of PII, namely, the Hastings-McLeod solution \cite{HM}. This solution might be the most famous one among the Painlev\'{e} transcendents, due to its frequent appearances in applications, especially in mathematical physics. For instance, the cumulative distribution function of the celebrated Tracy-Widom distribution \cite{TW1,TW2} admits an integral representation involving the Hastings-McLeod solution. It is noted that the Tracy-Widom distribution is also applied to describe the length of the longest increasing subsequence in random permutations \cite{BDJ}. Another application is the appearance of $\Psi$ functions associated with Hastings-Mcleod solution in building new universality class of limiting kernel for certain critical unitary random matrix ensembles \cite{BI,CK}; see also \cite{FW} for a nice review of this aspect and \cite{DKZ,DG} for its more recent applications related to non-intersecting Brownian motions. Our main result is stated in the next section.

\section{Statement of results}
The Hastings-McLeod solution $y_{HM}$ is a special solution of \eqref{eq:p2a} with $\alpha=0$, i.e., it satisfies the equation
\begin{equation}
y''=2y^{3}+xy. \label{eq:p2}
\end{equation}
The solution $y_{HM}$ is known to be pole-free on the real axis (\cite{HM}), and has the following asymptotics:
\begin{equation*}
y_{HM}(x)\sim \left\{
                \begin{array}{ll}
                  \Ai(x), & ~~~~\hbox{as $x\to +\infty$,} \\
                  \sqrt{-x/2}, & ~~~~\hbox{as $x\to -\infty$,}
                \end{array}
              \right.
\end{equation*}
where $\Ai(x)$ denotes the usual Airy function \cite{DLMF}. A plot of $y_{HM}(x)$ for real $x$ is shown in the left picture of Figure \ref{fig:HM}. The locations of poles for $y_{HM}$ is illustrated in the right picture of Figure \ref{fig:HM}. The six dashed lines are the critical lines defined in \eqref{def:gammak}, and it is clear from the picture that all the poles are located in the sectors $\Xi_1 \cup \Xi_4$, which is consistent with Conjecture \ref{conj}.

\begin{figure}[ht]
\begin{center}
\resizebox*{7cm}{!}{\includegraphics{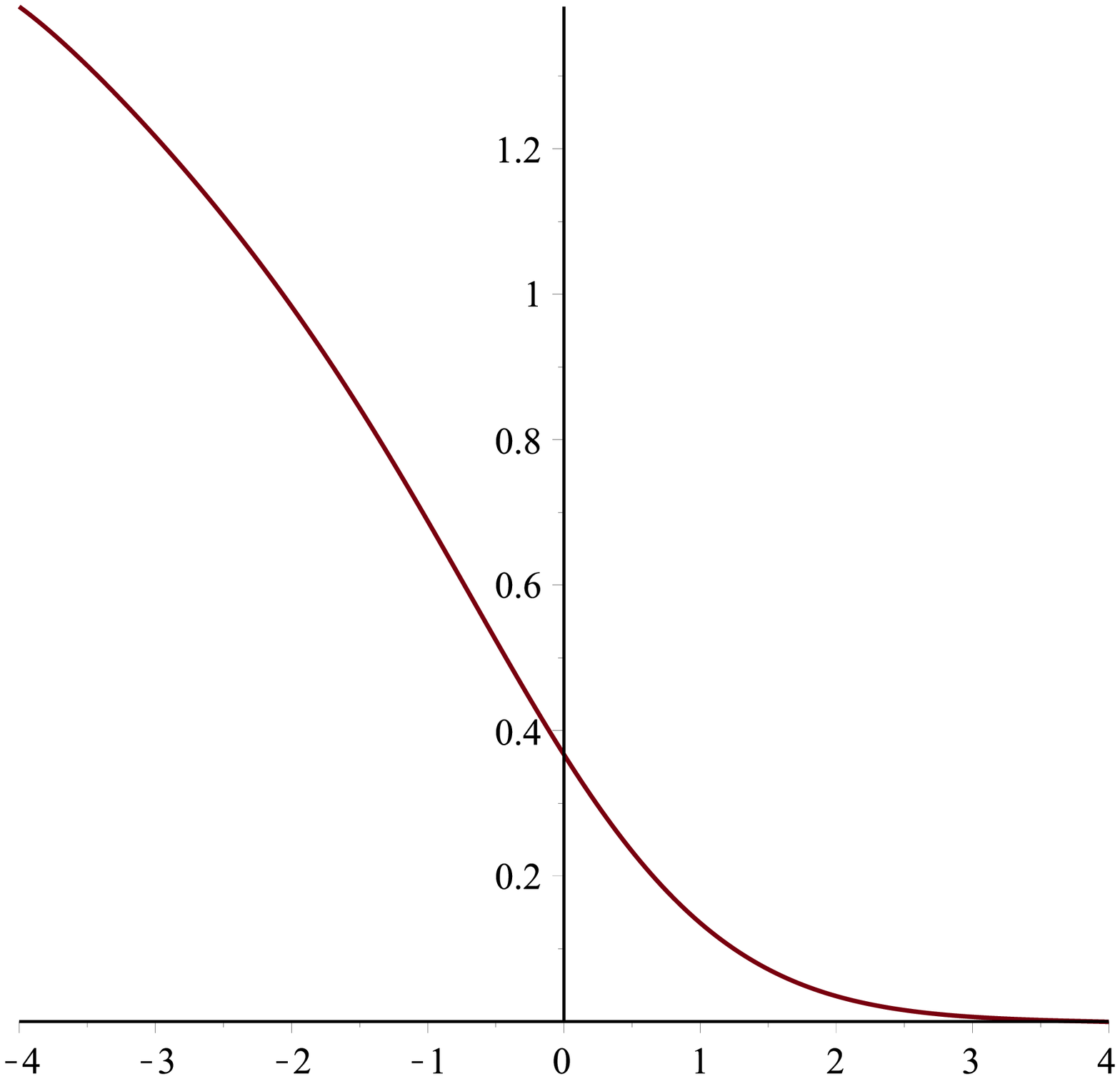}}
\hspace{2mm}
\resizebox*{7cm}{!}{\includegraphics{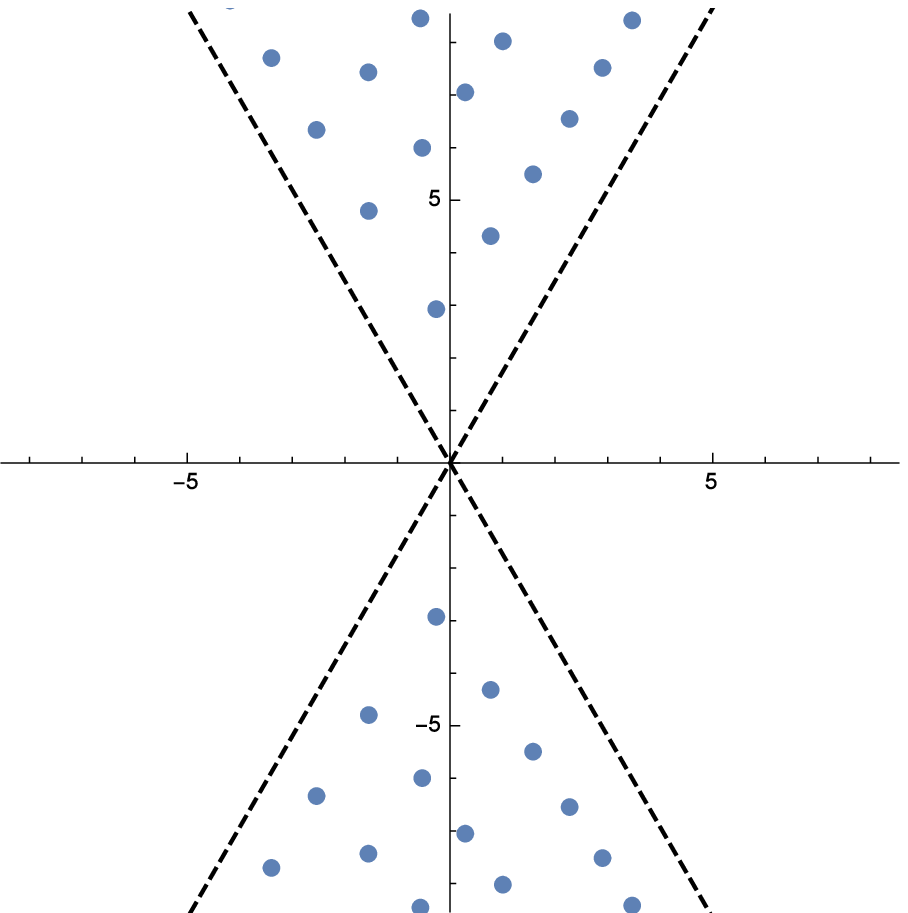}}
\caption{The Hastings-MeLeod solution (left) and its pole distribution (right).}
\label{fig:HM} \end{center}
\end{figure}

Our main result is stated as follows.
\begin{thm}\label{main-1}
The Hastings-McLeod solution $y_{HM}$ of the second Painlev\'{e} equation \eqref{eq:p2} is pole-free in
the region $\arg x \in [-\frac{\pi}{3},\frac{\pi}{3}]\cup [\frac{2\pi}{3},\frac{ 4 \pi}{3}]$.
\end{thm}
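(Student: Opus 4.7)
The strategy, as foreshadowed in the introduction, is to adapt to PII the ``quasi-solution + contraction mapping'' technique developed in \cite{dub} for Dubrovin's conjecture. The plan is: in each of the closed sectors $S_1:=\{\arg x\in[-\pi/3,\pi/3]\}$ and $S_2:=\{\arg x\in[2\pi/3,4\pi/3]\}$ (minus a bounded neighbourhood of the origin), construct an explicit analytic approximation $\tilde y$ to $y_{HM}$ whose residual $E:=\tilde y''-2\tilde y^3-x\tilde y$ can be bounded sharply, then show that $y_{HM}=\tilde y+h$ with $h$ small enough, and finally handle the remaining compact region separately. Since $\tilde y$ is analytic and $h$ is controlled, $y_{HM}$ is forced to be pole-free.

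For the sector $S_2$ (around the negative real axis), where $y_{HM}\sim\sqrt{-x/2}$, I would take $\tilde y$ to be a truncation of the formal asymptotic series obtained by inserting an ansatz $\sqrt{-x/2}\bigl(1+\sum_{k\ge 1}c_k x^{-3k}\bigr)$ into \eqref{eq:p2} and matching powers of $x$; optimal truncation produces a residual $E$ of size $O(|x|^{-(3N+1)/2}\sqrt{|x|})$ for the chosen $N$. For the sector $S_1$ (around the positive real axis), where $y_{HM}\sim\Ai(x)$ decays exponentially, I would use a quasi-solution of the form $\tilde y=\sum_{k=0}^{N} a_k(x)\,\Ai(x)^{2k+1}$ built from the Airy equation and \eqref{eq:p2}, giving a residual that is exponentially smaller than $\Ai(x)$. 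Both ansatz can be chosen analytic on a sector slightly larger than the target closed sector, so that the error estimates are uniform up to and including the boundary rays $\arg x=\pm\pi/3$ and $\arg x=2\pi/3,\,4\pi/3$.

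With $\tilde y$ fixed, subtracting \eqref{eq:p2} from the equation for $\tilde y$ yields
\begin{equation*}
h''-(6\tilde y^{2}+x)\,h \;=\; E \;+\; 6\tilde y\,h^{2}\;+\;2h^{3}.
\end{equation*}
The linearised operator $Lh:=h''-(6\tilde y^2+x)h$ can be inverted explicitly using a WKB/Airy fundamental pair whose leading behaviour is prescribed by $\tilde y$, turning the equation into a fixed-point problem $h=\mathcal T h$ with $\mathcal T h := L^{-1}(E+6\tilde y h^{2}+2h^{3})$. In a weighted sup-norm Banach space of functions analytic on a neighbourhood of the target sector, with the weight matching the local size of $\tilde y$ (algebraic in $S_2$, Airy-exponential in $S_1$), the smallness of $E$ makes $\mathcal T$ a contraction on a ball of radius $\sim\|E\|$. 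The boundary conditions used to invert $L$ are chosen so as to enforce the known asymptotics of $y_{HM}$ at $+\infty$ (in $S_1$) and $-\infty$ (in $S_2$); this pins down the unique fixed point as $y_{HM}-\tilde y$. The resulting estimate $\|h\|\le C\|E\|$ is much smaller than $|\tilde y|$ throughout the sector, so in particular $y_{HM}=\tilde y+h$ has no singularities there.

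The main obstacle, as in \cite{dub}, is the matching on a compact region: the quasi-solutions are only effective for $|x|\ge R$ with $R$ not too small, so pole-freeness on the compact set $\{|x|\le R\}\cap (S_1\cup S_2)$ must be established by a different device, e.g.\ by rigorously propagating the explicit real-line behaviour of $y_{HM}$ (where it is known to be pole-free and has controlled asymptotics) via a Gronwall/interval-arithmetic estimate, or by a second quasi-solution valid on an overlapping annulus. Making this matching work requires that the constants in the contraction mapping be explicit and that the admissible $R$ be small enough for the compact-region analysis — and this is precisely where the ``various technical improvements'' over \cite{dub} alluded to in the abstract (and the parallel improvements of \cite{AT}) are needed: sharpening the residual bounds on $E$, enlarging the domain of validity of each quasi-solution up to and beyond the rays $\arg x=\pm\pi/3,\,2\pi/3,\,4\pi/3$, and tightening the inversion of $L$ so that the two sectors can be joined cleanly to the compact region through the known asymptotics of $y_{HM}$ on the real line.
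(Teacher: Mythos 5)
Your overall strategy --- quasi-solution plus weighted-norm contraction in each sector at large $|x|$, joined to a Gronwall/interval estimate on a compact region and anchored by propagating known data on the real axis --- is exactly the plan the paper executes (and, for $\arg x\in[-\pi/3,\pi/3]$, the paper simply invokes Bertola's result rather than redoing it; your plan to treat both sectors directly is fine in spirit, since the paper itself notes the method would work there too). There is, however, one genuine gap in the sector around the negative real axis, and it is the technical heart of the paper.

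In $S_2$ your quasi-solution $\tilde y$ is a truncation of the pure power series $\sqrt{-x/2}\bigl(1+\sum_k c_k x^{-3k}\bigr)$. But by Proposition \ref{prop:asy}, the Hastings--McLeod solution in that sector also carries a nonzero exponentially small correction $c_-(-x)^{-1/4}e^{-\frac{2\sqrt 2}{3}(-x)^{3/2}}$ governed by the quasi-linear Stokes multiplier $c_-$. On the critical ray $\arg x=2\pi/3$ (the boundary of $S_2$ that you must reach) the exponent is purely imaginary, so this term is oscillatory with modulus $\sim |x|^{-1/4}$ --- far larger than the residual $E$ of the truncated power series. Consequently $h=y_{HM}-\tilde y$ is not ``of size $\sim\|E\|$'' on that ray: it is dominated by the Stokes term, and a single weighted sup-norm matching the power-series error cannot hold. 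Worse, the power-series asymptotics alone does \emph{not} characterize $y_{HM}$ uniquely (there is a one-parameter family of solutions differing by the choice of Stokes multiplier), so ``choosing boundary conditions to enforce the known asymptotics at $-\infty$'' must be spelled out as encoding the specific value of $c_-$. The paper resolves both points by the decomposition $h_{HM}=h_p+h_e$: it first constructs and bounds the power-series part $h_p$ by one contraction, then constructs the exponentially small part $h_e$ with the correct Stokes constant $\tilde c$ built into the quasi-solution $h_a$, using a second contraction in a different weighted norm with a different inverse-operator normalization. Your write-up should incorporate this two-step structure (or an equivalent device that separates the scales), because otherwise the contraction cannot close near $\arg x=2\pi/3$ and the fixed point is not pinned to $y_{HM}$.

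Two smaller remarks. First, the paper cuts the work in half by noting $y_{HM}(\bar z)=\overline{y_{HM}(z)}$, so it suffices to treat $2\pi/3\le\arg x\le\pi$; you should exploit this symmetry as well. Second, your Airy-based ansatz in $S_1$ is consistent with the paper's normalized change of variables and with the quasi-solution $h_b$, but note that the paper does not invert the full linearized operator $L=\partial_x^2-(6\tilde y^2+x)$ with a WKB pair; instead, after the change of variables the relevant operator is the constant-coefficient $\partial_t^2-2\partial_t$, which is inverted explicitly by elementary double integrals. This is one of the ``technical improvements'': keeping the inverse operator explicit so that the numerical constants in the contraction and in the compact-region matching can actually be verified.
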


For $|x|$ large enough, a partial result was shown in \cite{itskapaev} via the Riemann-Hilbert approach; see also \cite[Theorems 11.1 and 11.7]{book}. A more recent progress toward this result was obtained by Bertola in \cite{bert}, where he showed that $y_{HM}$ is pole-free in the sector $\arg x \in [-\frac{\pi}{3},\frac{\pi}{3}]$. His proof is based on the representation of the Hastings-McLeod solution in terms of the second logarithmic derivative of the Fredholm determinant of a certain integral operator and an operator-norm estimate. In contrast, our method is based on a direct analysis of \eqref{eq:p2}, and it can be applied to other equations including the general PII equation \eqref{eq:p2a} with $\alpha\ne 0$.

\section{Strategy of proof}\label{sec:about}
Although our method works for both of the two sectors $\arg x \in [-\frac{\pi}{3},\frac{\pi}{3}]$ and $\arg x \in [\frac{2\pi}{3},\frac{ 4 \pi}{3}]$, we shall focus on the sector $\arg x \in [\frac{2\pi}{3},\frac{ 4 \pi}{3}]$ and briefly mention the ideas of proof for the sector $\arg x\ \in [-\frac{\pi}{3},\frac{\pi}{3}]$, as the desired result about this sector was already shown in \cite{bert}; see Section \ref{sec:proofs} below.

We first note that $\overline{y_{HM}}(\bar{z})$ is also a solution to (\ref{eq:p2}). This, together with the fact that $y_{HM}$ is real on the real line and uniqueness of the solution, implies that $y_{HM}(z)=\overline{y_{HM}}(\bar{z})$. Therefore, for $\arg x \in [\frac{2\pi}{3},\frac{ 4 \pi}{3}]$, it is sufficient to prove the following result:
\begin{thm}\label{main}
The Hastings-McLeod solution $y_{HM}$ is pole-free in the region
\begin{equation*}
\Omega:=\left\{x\in\mathbb{C}~\Big{|}~2\pi/3\leq\arg x\leq\pi\right\}.
\end{equation*}
\end{thm}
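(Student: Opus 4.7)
The plan is to cover the sector $\Omega$ by two overlapping regions: an unbounded ``outer'' piece $\Omega_{\rm out}=\Omega\cap\{|x|\geq R\}$ for some sufficiently large $R$, and a bounded ``inner'' piece $\Omega_{\rm in}=\Omega\cap\{|x|\leq R+1\}$, and to produce on each piece an explicit quasi-solution $y_{qs}$ whose residual $r(x):=y_{qs}''-2y_{qs}^{3}-xy_{qs}$ is small. The ansatz $y=\sqrt{-x/2}\,(1+u(x))$ inserted in \eqref{eq:p2} and expanded formally in inverse powers of $x$ produces a series whose partial sum $y_{qs}^{\rm out}$ has residual of order $|x|^{-K}$ for any prescribed $K$. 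I would take only a few terms and verify by a direct calculation that $|r(x)|$ decays fast enough throughout $\Omega_{\rm out}$, choosing the branch of $\sqrt{-x/2}$ that matches the real-line asymptotic $y_{HM}(x)\sim\sqrt{-x/2}$.

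Next, I would convert \eqref{eq:p2} into a fixed-point equation for the error $h:=y_{HM}-y_{qs}^{\rm out}$. Writing the linearization as $L[h]=h''-(6(y_{qs}^{\rm out})^{2}+x)h$, the error equation is $L[h]=r(x)+6y_{qs}^{\rm out}h^{2}+2h^{3}$. Since $6(\sqrt{-x/2})^{2}+x=-2x$, the operator $L$ is a perturbation of the Airy-type operator $\partial_{x}^{2}+2x$, whose WKB solutions behave as $(-2x)^{-1/4}\exp(\pm\tfrac{1}{3}(-2x)^{3/2})$ and are essentially $\Ai(-2^{1/3}x)$, $\Bi(-2^{1/3}x)$. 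I would build a Green's function $G(x,\xi)$ on $\Omega_{\rm out}$ by picking, on each side of $x$, the WKB branch that decays along a suitable contour going to infinity in a direction in which $\re(-2\xi)^{3/2}$ is monotone; the Hastings-McLeod boundary condition $h(x)\to 0$ as $x\to -\infty$ singles out the decaying branch uniquely. This turns the problem into $h=G[r]+G[6y_{qs}^{\rm out}h^{2}+2h^{3}]$, which I would show to be a contraction in a weighted sup norm $\|h\|=\sup_{x\in\Omega_{\rm out}}|x|^{m}|h(x)|$ for a suitable exponent $m$. A bounded fixed point rules out blow-up, hence poles, in $\Omega_{\rm out}$.

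For the compact region $\Omega_{\rm in}$ no single asymptotic formula suffices, so I would adopt a rigorous numerical construction: fix an anchor $x_{0}$ on the negative real axis inside $\Omega_{\rm in}$ where $y_{HM}(x_{0})$ and $y_{HM}'(x_{0})$ are computable to high precision, build a high-order polynomial (Taylor or piecewise) quasi-solution $y_{qs}^{\rm in}$ on a finite cover of $\Omega_{\rm in}$ by small disks with rigorous interval-arithmetic coefficient bounds, linearize \eqref{eq:p2} around $y_{qs}^{\rm in}$, and run a contraction mapping argument disk by disk. Compactness of $\Omega_{\rm in}$ makes the analysis elementary once the disks are small enough for the cubic nonlinearity to be a contraction. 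A matching check on the overlap annulus $R\leq|x|\leq R+1$ glues the inner and outer constructions into a single bounded analytic solution which must coincide with $y_{HM}$ by uniqueness.

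The main obstacle I anticipate is the Green's function analysis up to and including the critical ray $\arg x=2\pi/3$. On this ray one has $\re(-2x)^{3/2}=0$, so the WKB solutions become purely oscillatory and the naive contour prescription degenerates: a careless choice produces a kernel that is not absolutely integrable in $\xi$, and the contraction constant blows up as $\arg x\to 2\pi/3$. Choosing an explicit contour that stays in the correct Stokes region throughout $\Omega_{\rm out}$, and proving a bound on $|G(x,\xi)|$ whose integral is uniformly small in $x$, is the delicate technical point, and is precisely where the ``technical improvements'' over the earlier treatment of Dubrovin's conjecture in \cite{dub} must enter. Matching the outer and inner constructions and pinning down an explicit admissible value of $R$ together with the contraction constants are mechanical by comparison but still require care.
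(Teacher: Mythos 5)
Your high-level plan --- split $\Omega$ into an outer region near infinity and a bounded inner region, build a quasi-solution on each, and close the error by a fixed-point argument --- is the same as the paper's. But several of the points you flag as ``delicate'' or ``mechanical'' are exactly where the real work lies, and as written your outline has two genuine gaps.

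First, the identification with $y_{HM}$. You claim that the boundary condition $h\to 0$ as $x\to -\infty$ picks out the decaying WKB branch uniquely. It does not: the condition $y\sim\sqrt{-x/2}$ along the negative real axis (equivalently $h\to 0$) singles out the whole one-parameter Ablowitz--Segur subfamily, whose members differ from one another by exponentially small terms in this sector. What characterizes $y_{HM}$ among them, as Proposition~\ref{prop:asy} and the remarks after it emphasize, is either the decay \eqref{eq:asy real} on the \emph{positive} real axis or, equivalently, the precise value of the quasi-linear Stokes multiplier $c_-$ in \eqref{eq:asy complex}. Your outer quasi-solution $y_{qs}^{\rm out}=\sqrt{-x/2}\,(1+u(x))$ contains only the power-series part and therefore cannot encode this choice; the contraction you describe would produce a bounded solution, but you would have no way to assert it equals $y_{HM}$ rather than a nearby Ablowitz--Segur solution. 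The paper resolves this by a two-stage construction in Section~\ref{sec:left}: first the pure power-series solution $h_p$ (Proposition~\ref{prop:hp}), then an explicit exponential correction $h_e\sim\tilde c\,e^{-t}/\sqrt{t}$ with the correct multiplier $\tilde c$ (Proposition~\ref{prop:delta1}), and finally the identification $h_{HM}=h_p+h_e$ is justified by the uniqueness of the solution with \emph{that} asymptotic form. This step is not optional.

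Second, the Green's-function bound on the critical ray, which you correctly identify as the crux, is not resolved in your sketch --- and your proposed kernel (built from $\Ai(-2^{1/3}x)$, $\Bi(-2^{1/3}x)$) would genuinely be painful to bound uniformly up to $\arg x=2\pi/3$. The paper sidesteps complex Airy estimates entirely by a Liouville change of variables \eqref{eq:changev1}, $t\propto(-x)^{3/2}$, which turns the linearization into a constant-coefficient operator ($\partial_t^2-1$, resp.\ $\partial_t^2-2\partial_t$). The inverse is then a pair of explicit exponential integrals, and the contour prescription in \eqref{eq:h1eqint} --- one leg to $+\infty$ (horizontal), the other to $-i\infty$ (vertical) --- is precisely the choice that keeps the kernel absolutely integrable on the image quarter-plane including its boundary; the elementary Lemmas~\ref{lem:ineq} and~\ref{lem:ineq2} then deliver uniform bounds by hand. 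In your coordinates this is the uniformization of the Stokes geometry, and without it the ``suitable contour'' you invoke remains unspecified. Similarly, the ``computable to high precision'' anchor values you need for the inner region are not free: the paper spends Sections~\ref{sec:farright} and~\ref{sec:origin} (Proposition~\ref{prop:delta2}, Corollary~\ref{yhm3}, Proposition~\ref{prop:yhm0}) propagating rigorous initial data from $x=3$, where the Airy-type asymptotics are sharp, down to $x=0$, using a comparison lemma (Lemma~\ref{lem:estbound}) in place of interval arithmetic. That derivation is part of the proof, not a numerical input.

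In short: the architecture is the right one, but the proposal leaves open the uniqueness/Stokes-multiplier issue (a real gap that would let the argument prove pole-freeness of the wrong solution) and defers both the critical-ray kernel estimate and the anchor computation, which are where the paper's actual technical content resides.
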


As mentioned before, we will use the same idea as in \cite{dub} to prove the Theorem. To be precise,
we will analyze $y_{HM}$ in two regions
\begin{equation}\label{def:omega0}
\Omega_{0}:=\left\{x\in\mathbb{C}~\Big{|}~|x|\geqslant\frac{3^{4/3}}{2},~~2\pi/3\leqslant\arg x\leqslant\pi\right\}
\end{equation}
and
\begin{equation}\label{def:omega2}
\Omega_{2}:=\left\{x\in\mathbb{C}~\Big{|}~|x|\leqslant9/4,~~2\pi/3\leqslant\arg x\leqslant\pi\right\}.
\end{equation}
In each region we will construct an explicit quasi-solution consisting of polynomials and exponential functions, and show that the difference between $y_{HM}$ and the quasi-solution is small in a suitable norm. This shows that $y_{HM}$ is pole-free in both $\Omega_{0}$ and $\Omega_{2}$, and hence in $\Omega\subseteq\Omega_{0}\cup\Omega_{2}$.

The main challenge of the proof is to find an effective quasi-solution approximation of the Hastings-McLeod solution which has sufficient accuracy for both small and large $|x|$. This requires comprehensive knowledge of the asymptotics of the solution near infinity. To this end, we mention the following asymtotics of Hastings-McLeod solution relevant to our proof (see \cite[Theorem 11.7]{book} and \cite{itskapaev}).
\begin{prop}\label{prop:asy}
Let $y_{HM}$ be the Hastings-McLeod solution of the second Painlev\'{e} equation \eqref{eq:p2}, then
\begin{equation}\label{eq:asy real}
y_{HM}(x)=\frac{1}{2\sqrt{\pi}}x^{-1/4}e^{-\frac{2}{3}x^{3/2}}\left(1+\mathcal{O}(x^{-3/4})\right)
\end{equation}
as $x\to+\infty$ and $\arg x =0$;
\begin{equation}\label{eq:asy complex}
y_{HM}(x)=
\sqrt{-x/2}\left(1+\mathcal{O}((-x)^{-3/2})\right)
+c_-(-x)^{-1/4}e^{-\frac{2\sqrt{2}}{3}(-x)^{3/2}}\left(1+\mathcal{O}(x^{-1/4})\right)
\end{equation}
as $x\to \infty$ and $\arg x \in [\frac{2\pi}{3},\frac{4\pi}{3})$,
where
\begin{equation*}\label{eq:stokeM}
c_-=\frac{i2^{-7/4}}{\sqrt{\pi}}.
\end{equation*}
\end{prop}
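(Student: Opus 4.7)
The plan is to derive both asymptotics from the isomonodromic/Riemann-Hilbert representation of the Hastings-McLeod solution and the Deift-Zhou nonlinear steepest descent method. Recall that solutions of PII are in bijection with triples of Stokes multipliers $(s_1,s_2,s_3)$ on the Stokes rays subject to $s_1-s_2+s_3+s_1s_2s_3=0$, and the Hastings-McLeod solution is singled out by $s_1=-s_3=i$, $s_2=0$. This identifies $y_{HM}(x)$ with the coefficient extracted from the large-$\zeta$ asymptotic of the associated $2\times 2$ matrix solution $\Psi(\zeta;x)$ of the Flaschka-Newell Lax pair.

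For the asymptotic \eqref{eq:asy real} on $\arg x=0$, I would undertake the standard Airy-type steepest descent: after the rescaling $\zeta\mapsto\sqrt{x}\,\zeta$, the phase $\theta(\zeta)=\tfrac{4}{3}\zeta^3+\zeta$ has purely imaginary stationary points at $\zeta=\pm i/2$, and the steepest descent contours avoid the real axis. Since $s_2=0$ the jump on the real Stokes ray disappears; deforming all remaining jumps onto steepest descent lenses, the small-norm RH problem has a trivial global parametrix, and extracting $y_{HM}$ from the $\zeta^{-1}$ coefficient yields the stated Airy-type asymptotic with explicit prefactor $\frac{1}{2\sqrt{\pi}}x^{-1/4}e^{-\frac{2}{3}x^{3/2}}$.

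For the asymptotic \eqref{eq:asy complex} in the sector $\arg x\in[\tfrac{2\pi}{3},\tfrac{4\pi}{3})$, the scaling $\zeta\mapsto\sqrt{-x}\,\zeta$ moves the stationary points onto the real axis at $\zeta=\pm 1/\sqrt{2}$, and the $g$-function used to open lenses is the one adapted to a cut along $[-1/\sqrt{2},1/\sqrt{2}]$, i.e., the degenerate Boutroux elliptic function. The global outer parametrix built on this cut produces the algebraic leading behavior $\sqrt{-x/2}$ from the $\zeta^{-1}$ coefficient of the model, and the next term in its expansion yields the $\mathcal{O}((-x)^{-3/2})$ correction. The exponentially small contribution $c_-(-x)^{-1/4}e^{-\frac{2\sqrt{2}}{3}(-x)^{3/2}}$ arises as the first residual when the subdominant Stokes multiplier $s_1=i$ is peeled off against the outer parametrix; the explicit constant $c_-=i2^{-7/4}/\sqrt{\pi}$ is computed by matching against a parabolic-cylinder (Weber) parametrix at the saddle points.

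The main obstacle will be ensuring uniformity of the asymptotic up to (and including) the boundary rays $\arg x=\tfrac{2\pi}{3}$ and $\arg x=\tfrac{4\pi}{3}$, where the Stokes line configuration rearranges and the subdominant exponential can become comparable to the algebraic correction. Uniform control requires a unified choice of steepest descent contours that survives the reorganization, together with careful tracking of errors in the local parametrices at the moving saddles in order to preserve the stated $\mathcal{O}((-x)^{-3/2})$ and $\mathcal{O}(x^{-1/4})$ bounds throughout the closed sector.
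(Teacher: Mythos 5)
The paper does not prove Proposition~\ref{prop:asy}: it quotes the asymptotics as a known result, citing Fokas--Its--Kapaev--Novokshenov (Theorem~11.7 of their monograph) and Its--Kapaev, so there is no internal argument to compare your sketch against. Your Riemann--Hilbert / nonlinear steepest-descent outline does correctly describe the method used in those references: the Hastings--McLeod solution lies in the Ablowitz--Segur family $s_2=0$, $s_1=-s_3$ with the special Stokes value, the $\arg x = 0$ asymptotic \eqref{eq:asy real} comes from an Airy-type saddle analysis after the jump on the real ray disappears, and the oscillatory asymptotic \eqref{eq:asy complex} with the quasi-linear Stokes multiplier $c_-$ comes from a $g$-function plus parabolic-cylinder local parametrix at the moving saddles, which is exactly what Its--Kapaev call the quasi-linear Stokes phenomenon. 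One caution worth making explicit: the validity of \eqref{eq:asy complex} \emph{up to and including} the boundary ray $\arg x = 2\pi/3$ --- which the paper singles out as essential, since it is there that the algebraic and exponential parts become comparable --- is precisely the hard technical content of the Its--Kapaev analysis, not a routine small-norm estimate; your closing paragraph correctly flags this obstacle but does not resolve it, so the proposal stands as a faithful summary of the known strategy rather than a self-contained proof.
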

The constant $c_-$ is the so-called quasi-linear Stokes' multiplier, which reflects the quasi-linear Stokes phenomenon for the second Painlev\'{e} transecedent; see \cite{Cos98,itskapaev,Kap92} for more details. We emphasize two features of the asymptotics in Proposition \ref{prop:asy}:
\begin{itemize}
  \item The asymptotics \eqref{eq:asy complex} is valid along the critical line $\arg x =2\pi/3$ (i.e., the boundary of the relevant sector), where the asymptotics is oscillatory.
  \item The Hastings-McLeod solution $y_{HM}$ is characterized by either of the two asymptotic relations \eqref{eq:asy real} and \eqref{eq:asy complex}. Indeed, it suffices to specify the asymptotics just along the boundary rays; see \cite[Chapter 11]{book}.
\end{itemize}
As we shall see later, the construction of quasi-solutions in the regions away from the origin is based on these asymptotic behaviors.

The rest of this paper is organized as follows. The analysis of $y_{HM}$ in $\Omega_{0}$ is accomplished in Section \ref{sec:left}. To get a concrete estimate of the initial values of $y_{HM}$ at $0$, we will also need to study $y_{HM}$ along $[0,+\infty)$ before we are able to construct a quasi-solution in $\Omega_{2}$, which is carried out in Sections \ref{sec:farright} and \ref{sec:origin}. Analysis in $\Omega_{2}$ is accomplished in Section \ref{sec:left0}. We conclude this paper with the proofs of our main results in Section \ref{sec:proofs}.

\section{Analysis of $y_{HM}$ in the region $\Omega_{0}$}\label{sec:left}

We start with the analysis of the PII equation (\ref{eq:p2}) in the region $|x|\geq 3^{4/3}/2$, $2\pi/3\leqslant\arg x\leqslant\pi$. Our goal in this section is to prove the following result:
\begin{prop}\label{prop:farleft}
The Hastings-McLeod solution $y_{HM}$ is pole-free in the region $\Omega_{0}$, where $\Omega_0$ is defined in \eqref{def:omega0}.
\end{prop}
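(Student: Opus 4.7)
The plan is to follow the quasi-solution / contraction-mapping blueprint from \cite{dub}, adapted to the PII asymptotics in the sector $\Omega_0$. From Proposition \ref{prop:asy}, as $x\to\infty$ with $\arg x \in [2\pi/3,\pi]$, the Hastings-McLeod solution is the sum of an algebraic branch $\sqrt{-x/2}$ plus an exponentially small correction governed by the Stokes multiplier $c_-$. I would therefore construct an explicit quasi-solution of the form
\begin{equation*}
y_0(x) = \sqrt{-x/2}\;P\bigl((-x)^{-3/2}\bigr) + c_-\,(-x)^{-1/4}\,e^{-\frac{2\sqrt 2}{3}(-x)^{3/2}}\,Q\bigl((-x)^{-3/2}\bigr),
\end{equation*}
where $P$ and $Q$ are truncations of the formal power series one obtains by plugging such an ansatz into \eqref{eq:p2} and solving term by term. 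The number of terms retained is chosen large enough so that the residual $R[y_0] := y_0'' - 2y_0^3 - x y_0$ is controllably small throughout $\Omega_0$ in an appropriate weighted norm, and small enough that the explicit expressions remain tractable.

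The main computation consists of: (i) computing $R[y_0]$ and bounding $|R[y_0]|$ in $\Omega_0$, taking advantage of the fact that $|x|\ge 3^{4/3}/2$ so that the expansion parameter $(-x)^{-3/2}$ is uniformly of modulus at most a fixed constant less than one; (ii) studying the linearized operator $\mathcal{L} h := h'' - (6y_0^2 + x)h$, whose leading part is $h'' + 2x h$; and (iii) constructing a right inverse $\mathcal{L}^{-1}$ via a Green's function built from two independent WKB-type solutions of $\mathcal{L}h=0$ (whose exponents behave like $\pm\frac{2\sqrt 2}{3}(-x)^{3/2}$), with explicit bounds on the integration contours running in from infinity along directions in which the exponentials decay. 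Writing $y_{HM} = y_0 + h$, the equation becomes $\mathcal{L}h = -R[y_0] - 6 y_0 h^2 - 2 h^3$, and I would set up the fixed-point problem
\begin{equation*}
h = \mathcal{N}(h) := -\mathcal{L}^{-1}\bigl(R[y_0] + 6 y_0 h^2 + 2 h^3\bigr)
\end{equation*}
on a ball in a Banach space of analytic functions on $\Omega_0$ with the norm $\|h\| := \sup_{x\in\Omega_0} |x|^{1/4}|h(x)|$ (or a similar weighted sup-norm), then verify that $\mathcal{N}$ is a contraction by the standard estimates $\|\mathcal{L}^{-1} R[y_0]\| \ll 1$ and $\|\mathcal{L}^{-1}(y_0 h^2)\| \le C\|h\|^2$.

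The contraction mapping theorem then produces a unique analytic fixed point $h$, so that $y_0 + h$ is an analytic (hence pole-free) solution of \eqref{eq:p2} in $\Omega_0$. To identify $y_0+h$ with $y_{HM}$, I would invoke the second bullet point following Proposition \ref{prop:asy}: it suffices to match the asymptotics along the boundary rays $\arg x = 2\pi/3$ and $\arg x = \pi$. By construction $y_0$ has the prescribed leading order $\sqrt{-x/2}$ plus the $c_-$ exponential correction, and $h$ is $o$ of both pieces as $|x|\to\infty$, so $y_0+h$ satisfies \eqref{eq:asy complex} along these rays and must equal $y_{HM}$.

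The main obstacle will be making the Green's function estimates uniform up to the inner boundary $|x|=3^{4/3}/2$ and along the oscillatory boundary ray $\arg x = 2\pi/3$, where the two WKB solutions cease to have clean dominant/subdominant behavior. Concretely, I expect the most delicate step to be choosing integration contours for $\mathcal{L}^{-1}$ that avoid Stokes lines of the linearized equation while still lying inside $\Omega_0$, and tracking enough terms in $P$ and $Q$ that the explicit bound on $\|\mathcal{L}^{-1}R[y_0]\|$ is smaller than the radius of the contraction ball. The rest is an explicit but careful calculation using the closed-form quasi-solution.
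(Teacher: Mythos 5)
Your overall blueprint---construct an explicit quasi-solution from the two-term large-$|x|$ asymptotics, close a contraction around it in a weighted sup-norm, and identify the resulting analytic solution with $y_{HM}$ via the boundary-ray characterization---is indeed the method the paper uses for $\Omega_0$. But the paper's execution differs in two structural ways, and your identification step has a gap as written.

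Structurally, the paper first changes variables $t=\frac{2}{3}\sqrt{2}(-x)^{3/2}$, $y=\frac{\sqrt[3]{3t}}{2}h(t)$, bringing \eqref{eq:p2} to the normalized form \eqref{eq:hleft}. After this substitution the linear operator one has to invert around the leading balance $h\approx 1$ is the \emph{constant-coefficient} operator $h_1''-h_1$ (and, after factoring out the exponential, $\delta''-2\delta'$). Both are inverted by elementary explicit integral operators $\mathcal L_1,\mathcal L_2$ with contours running to $+\infty$ (where $e^{-t}$ decays) and to $-i\infty$ (where algebraic decay of the integrand suffices), yielding the clean bounds of Lemmas \ref{lem:ineq} and \ref{lem:ineq2}. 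This avoids entirely the variable-coefficient WKB Green's function you propose, whose uniform estimates near the anti-Stokes ray you correctly anticipate to be the hard part. The paper also splits the work into two contractions: Proposition \ref{prop:hp} constructs the exact zero-Stokes-multiplier solution $h_p$, and then, writing $h=h_p+\tilde c e^{-t}t^{-1/2}h_3$, Proposition \ref{prop:delta1} closes a second contraction for $h_3$ around the explicit $h_a$. Your single-stage contraction around a combined $P$+$Q$ quasi-solution forces one Green's function to handle algebraic and exponential forcing simultaneously, which is workable but heavier.

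The genuine gap is in the norm and the identification. Along the ray $\arg x=2\pi/3$ we have $\arg(-x)^{3/2}=-\pi/2$, so the $c_-$ piece of $y_0$ has modulus $\sim|x|^{-1/4}$. With $\|h\|=\sup|x|^{1/4}|h(x)|$, the fixed point only satisfies $|h|=O(|x|^{-1/4})$, i.e.\ the \emph{same} order as the Stokes piece; you therefore cannot conclude that $h$ is ``$o$ of both pieces,'' and cannot read off from $y_0+h$ the asymptotics \eqref{eq:asy complex} with the correct Stokes multiplier $c_-$. You need a strictly stronger weight (the paper uses $|t|^{7/2}$ for $h_1$ and $|t|^2$ for $\delta_1$, which in $x$ is $|x|^{21/4}$ and $|x|^3$), with correspondingly more terms retained in $P$ and $Q$ so that $\mathcal L^{-1}R[y_0]$ still fits inside the contraction ball.
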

As mentioned before, we will prove the above result by constructing a pole-free quasi-solution to PII, and showing that the difference between the quasi-solution and the Hastings-McLeod solution is bounded. Our construction of this quasi-solution is motivated by asymptotic expansions with exponential sums studied in \cite{costin},
which suggests that we make the following change of variables:
\begin{equation}
t=\frac{2}{3}\sqrt{2}(-x)^{3/2};\; \quad y(x)=\frac{\sqrt[3]{3t}}{2}h(t).\label{eq:changev1}
\end{equation}
This brings (\ref{eq:p2}) into the normalized form
\begin{equation}\label{eq:hleft}
h''(t)+\frac{h'(t)}{t}+\frac{h(t)}{2}-\frac{h(t)}{9t^{2}}-\frac{1}{2}h(t)^{3}=0,
\end{equation}
and the region of interest in Proposition \ref{prop:farleft} corresponds to
\begin{equation*}
\Omega_{1}:=\left\{t\in\mathbb{C}~\Big{|}~|t|\geqslant3,-\pi/2\leqslant\arg t\leqslant0\right\},
\end{equation*}
in the new variable $t$.

Let $h_{HM}$ denote the solution of (\ref{eq:hleft}) corresponding to the Hastings-McLeod solution. In view of \eqref{eq:asy complex}, one naturally expects to have the decomposition
\begin{equation}\label{eq:decoh}
h_{HM}=h_{p}+h_{e},
\end{equation}
where $h_{p}$ is a solution of (\ref{eq:hleft}) with pure power series behavior near $-i\infty$ (i.e., with zero quasi-linear Stokes' multiplier), and $h_{e}$ is exponentially small near $\infty$. This is also consistent with the fact the Painlev\'{e} equations admit a one-parameter family of solution represented by the sum
\[
y=\textrm{(power series)+(exponential terms)},
\]
which was found by Boutroux \cite{Boutroux-a,Boutroux-b}, and particularly this includes $y_{HM}$ as a special case of PII.
Since we only need to prove $h_{HM}$ is pole-free in $\Omega_{1}$, we do not need to consider full expansions. Instead, we will only show the existence of a decomposition \eqref{eq:decoh} with
$$h_{p}\sim1-\frac{1}{9t^{2}} \quad \textrm{and} \quad h_{e}\sim\frac{\sqrt{2}\tilde{c}e^{-t}}{\sqrt{t}},$$
where $\tilde{c}$ is a constant related to the quasi-linear Stokes multiplier $c_-$; see \eqref{eq:hmasymp1} below.

\subsection{Existence and uniqueness of the power series solution $h_{p}$}

Recall the asymptotics of $y_{HM}$ in \eqref{eq:asy complex}, by (\ref{eq:changev1}), this corresponds to a solution $h\sim1$ of (\ref{eq:hleft}) in $\Omega_1$. Formal asymptotic analysis of (\ref{eq:hleft}) indicates that there should exist a solution $h(t)\sim1-\frac{1}{9t^{2}}$. We thus substitute $$h(t)=1-\frac{1}{9t^{2}}+\frac{h_{1}(t)}{\sqrt{t}}$$
into (\ref{eq:hleft}), and get the equation
\begin{align}\label{eq:h1eq}
&h_{1}''(t)-h_{1}(t)
 \nonumber \\
&=\frac{73}{162t^{7/2}}-\frac{1}{1458t^{11/2}}+\left(-\frac{17}{36t^{2}}+\frac{1}{54t^{4}}\right)h_{1}(t)+\left(\frac{3}{2\sqrt{t}}-\frac{1}{6t^{5/2}}\right)h_{1}(t)^{2}+\frac{h_{1}(t)^{3}}{2t}
\nonumber \\
&=:R_{h}(t,h(t)).
\end{align}

Inverting the differential operator on the left side of (\ref{eq:h1eq}), we get the integral equation
\begin{multline}\label{eq:h1eqint}
h_{1}(t)=\mathcal{T}_{1}(h(t)):=\mathcal{L}_{1}\left(R_{h}(t,h(t))\right)
\\
:=\frac{1}{2}\left(e^{t}\int_{\infty}^{t}e^{-s}R_{h}(s,h(s))ds-e^{-t}\int_{-i\infty}^{t}e^{s}R_{h}(s,h(s))ds\right),
\end{multline}
where the first integral is along a horizontal line, while the second one is along a vertical ray starting from $t$.
We intend to prove existence of a solution $h_{p}$ by showing that $\mathcal{T}_{1}$ is a contractive map in a suitable Banach space. The expressions of $R_{1}$ and $\mathcal{L}$ indicate that it is necessary to estimate generalized exponential integrals in the complex plane. For this purpose, we introduce the following inequalities, which will also be used later:
\begin{lem}\label{lem:ineq}
Assume $f$ is analytic in the right half plane with $|f(s)|\leqslant c/|s|^{n}$ where $c>0$, $n>1$, and $\re s \geqslant0$. For $t\in\Omega_1$, we have the estimates
\begin{align}
\left|\int_{+\infty}^{t}e^{-ms}f(s)ds\right|&\leqslant\frac{ce^{-m \re t}}{m|t|^{n}}, \qquad m>0,\label{eq:ineq1}
\\
\left|\int_{+\infty}^{t}e^{-ms}f(s)ds\right|&\leqslant\frac{ce^{-m \re t}}{(n-1)|t|^{n-1}}, \qquad m\geqslant 0,\label{eq:ineq2}
\\
\left|\int_{-i\infty}^{t}e^{ms}f(s)ds\right|&\leqslant\frac{c\sqrt{\pi}\Gamma\left(\frac{n}{2}-\frac{1}{2}\right)e^{m \re t}}{2\Gamma\left(\frac{n}{2}\right)|t|^{n-1}},
\qquad m\geqslant 0.\label{eq:ineq3}
\end{align}
\end{lem}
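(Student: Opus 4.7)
The plan is to prove each of the three estimates by choosing a tailor-made parametrization of the integration path, exploiting the freedom (via Cauchy's theorem and the analyticity of $f$ in the right half-plane) to deform the contour onto whichever line makes the modulus $|s|$ easiest to bound from below. In each case the resulting real integral is elementary. For \eqref{eq:ineq1} I would keep the original horizontal contour: write $s = t+\tau$ with $\tau\in[0,+\infty)$, and observe that since $\re t \geq 0$ for $t\in\Omega_1$, one has $|t+\tau|^2 = (\re t + \tau)^2 + (\im t)^2 \geq |t|^2$. Pulling $|t|^{-n}$ out of the integrand and using $\int_0^\infty e^{-m\tau}\,d\tau = 1/m$ (where the strict hypothesis $m>0$ enters) yields the bound at once.

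For \eqref{eq:ineq2} the horizontal parametrization is inadequate, since when $\re t$ is small the integral of $|t+\tau|^{-n}$ on $[0,\infty)$ only produces a constant that depends unfavorably on $\arg t$. Instead I would deform the contour onto the radial ray $\arg s = \arg t =: \theta \in [-\pi/2,0]$, writing $s = re^{i\theta}$ for $r \in [|t|,+\infty)$. The deformation is justified by analyticity of $f$ in the right half-plane together with the decay $|f|\leq c/|s|^n$ with $n>1$, so that the closing arc at infinity contributes nothing. On the ray, $\re s = r\cos\theta \geq |t|\cos\theta = \re t$, and since $m\geq 0$ and $\cos\theta\geq 0$ this gives $|e^{-ms}|\leq e^{-m\re t}$ throughout. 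The remaining real integral is $\int_{|t|}^\infty r^{-n}\,dr = \frac{1}{(n-1)|t|^{n-1}}$, which produces exactly the claimed bound.

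For \eqref{eq:ineq3} the prescribed contour is already well chosen: parametrize $s = t - i\tau$ with $\tau \in [0,+\infty)$, so that $|e^{ms}| = e^{m\re t}$. Since $\im t \leq 0$ for $t\in\Omega_1$, one has $|\im t - \tau| = |\im t|+\tau$, and consequently
\begin{equation*}
|t-i\tau|^2 = (\re t)^2 + (|\im t|+\tau)^2 \geq (\re t)^2 + (\im t)^2 + \tau^2 = |t|^2 + \tau^2.
\end{equation*}
Bounding $|f|$ by $c/(|t|^2+\tau^2)^{n/2}$ and substituting $\tau = |t|u$ reduces the problem to the Beta-function evaluation $\int_0^\infty (1+u^2)^{-n/2}\,du = \frac{\sqrt{\pi}\,\Gamma((n-1)/2)}{2\Gamma(n/2)}$, producing exactly the stated constant.

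All three estimates are elementary once the correct path is selected, and I do not expect any serious obstacle. The one subtlety worth spelling out carefully is the contour deformation in \eqref{eq:ineq2}, but this is a standard closing-at-infinity argument: on the arc $s=Re^{i\phi}$, $\phi\in[\theta,0]$, the integrand is bounded by $c R^{1-n} e^{-mR\cos\phi}\leq c R^{1-n}$, which tends to $0$ as $R\to\infty$ because $n>1$.
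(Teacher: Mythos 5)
Your proposal is correct and coincides with the paper's own proof: horizontal contour for \eqref{eq:ineq1} with $|t+\tau|\geq|t|$, radial contour for \eqref{eq:ineq2} with $\re s\geq\re t$, and vertical contour for \eqref{eq:ineq3} with $|t-i\tau|^2\geq|t|^2+\tau^2$ reducing to the Beta-function integral. The only cosmetic difference is that you spell out the closing-at-infinity justification for the contour deformation, which the paper states without detail.
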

\begin{proof}
We write $t=a+bi$ where $a\geqslant0$ and $b\leqslant0$. To prove the first inequality, we note that since $f$ is analytic with at least $t^{-n}$ decay in the right half plane, we can rotate the integration path to a horizontal one, namely $s=a+u+bi$ with $u$ ranging from $\infty$ to 0. Then by direct calculations we have
\[
\left|\int_{+\infty}^{t}e^{-ms}f(s)ds\right|\leqslant ce^{-ma}\int_{0}^{+\infty}\frac{e^{-mu}}{((a+u)^{2}+b^{2})^{n/2}}du\leqslant\frac{ce^{-ma}}{(a^{2}+b^{2})^{n/2}}\int_{0}^{+\infty}e^{-mu}du=\frac{ce^{-ma}}{m|t|^{n}}.
\]

Alternatively, we can also rotate the integration path to a radial
one, which gives
\[
\left|\int_{+\infty}^{t}e^{-ms}f(s)ds\right|\leqslant ce^{-ma}\int_{|t|}^{+\infty}\frac{1}{|s|^{n}}d|s|=\frac{ce^{-m \re t}}{(n-1)|t|^{n-1}}.
\]

To prove the last inequality, we rotate the contour to a vertical one, namely $s=a+ui$ with $u$ ranging from $-\infty$ to 0. By direct calculations we have
\begin{multline*}
\left|\int_{-i\infty}^{t}e^{ms}f(s)ds\right|\leqslant c e^{ma}\int_{-\infty}^{0}\frac{1}{(a^{2}+(b+u)^{2})^{n/2}}du\leqslant c e^{ma}\int_{-\infty}^{0}\frac{1}{(a^{2}+b^{2}+u{}^{2})^{n/2}}du \\
\leqslant\frac{c e^{ma}}{|t|^{n-1}}\int_{-\infty}^{0}\frac{1}{(1+v{}^{2})^{n/2}}dv=\frac{ c\sqrt{\pi}\Gamma\left(\frac{n}{2}-\frac{1}{2}\right) e^{ma}}{2\Gamma\left(\frac{n}{2}\right)|t|^{n-1}}.
\end{multline*}

This completes the proof of the lemma.
\end{proof}

Now we are ready to prove the main result below.
\begin{prop}\label{prop:hp}
There is a unique solution of equation (\ref{eq:h1eq}) satisfying
\begin{equation*}
|h_{1}(t)|\leqslant\frac{6}{5|t|^{7/2}}
\end{equation*}
in $\Omega_{1}$.
\end{prop}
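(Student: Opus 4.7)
The plan is to apply Banach's fixed point theorem to the integral operator $\mathcal{T}_1$ defined in \eqref{eq:h1eqint}. Specifically, I would work in the Banach space $\mathcal{B}$ of functions continuous on $\Omega_1$ and analytic in its interior, equipped with the weighted sup-norm $\|h_1\|_{*} := \sup_{t \in \Omega_1} |t|^{7/2}\,|h_1(t)|$, and restrict attention to the closed ball $\mathcal{B}_\rho := \{h_1 \in \mathcal{B} : \|h_1\|_{*} \leq \rho\}$ with $\rho = 6/5$.

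The first step is to verify the self-mapping property $\mathcal{T}_1(\mathcal{B}_\rho) \subseteq \mathcal{B}_\rho$. For $h_1 \in \mathcal{B}_\rho$, I would decompose $R_h(s, h_1(s))$ from \eqref{eq:h1eq} into the inhomogeneous source $\tfrac{73}{162 s^{7/2}} - \tfrac{1}{1458 s^{11/2}}$, the linear correction $\bigl(-\tfrac{17}{36 s^{2}} + \tfrac{1}{54 s^{4}}\bigr) h_1$, and the quadratic and cubic pieces. Using $|h_1(s)| \leq \rho\,|s|^{-7/2}$ together with $|s| \geq 3$, each piece decays at least like $|s|^{-7/2}$, and the nonlinear contributions carry additional factors of $|s|^{-4}$ or smaller. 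Applying Lemma \ref{lem:ineq} to the two integrals that make up $\mathcal{L}_1$ — inequality \eqref{eq:ineq1} with $m=1$ to the horizontal integral for the $|t|^{-n}$ factor, and \eqref{eq:ineq3} (or a sharpened variant) to the vertical integral — and noting that the exponential prefactors $e^{\pm t}$ in $\mathcal{L}_1$ exactly cancel the $e^{\mp \re t}$ from the estimates, one obtains a pointwise bound $|t|^{7/2}|\mathcal{T}_1(h_1)(t)| \leq 6/5$ after summing the pieces.

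The second step is the contraction estimate. For $h_1, \tilde h_1 \in \mathcal{B}_\rho$, the difference $R_h(s, h_1) - R_h(s, \tilde h_1)$ factors as $(h_1 - \tilde h_1)$ times a coefficient that is $O(|s|^{-2})$ from the linear piece and $O(|s|^{-4})$ or smaller from the quadratic and cubic pieces (since both $h_1$ and $\tilde h_1$ are themselves $O(|s|^{-7/2})$). Invoking the same integral estimates from Lemma \ref{lem:ineq} and again using $|t| \geq 3$ gives $\|\mathcal{T}_1(h_1) - \mathcal{T}_1(\tilde h_1)\|_{*} \leq \kappa \,\|h_1 - \tilde h_1\|_{*}$ with an explicit $\kappa < 1$. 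Banach's fixed point theorem then produces a unique fixed point $h_1 \in \mathcal{B}_\rho$, which by construction solves \eqref{eq:h1eq} and satisfies the stated bound.

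The main obstacle is numerical bookkeeping: the constant $6/5$ is tight and the hypothesis $|t| \geq 3$ is used right at the boundary of applicability, so every constant in the splitting of $\mathcal{L}_1$ must be tracked carefully. Particular attention is needed for the vertical-contour estimate \eqref{eq:ineq3}, whose polynomial decay $|t|^{-(n-1)}$ is one order weaker than the horizontal estimate \eqref{eq:ineq1}; recovering the correct $|t|^{-7/2}$ order and a small enough constant may require an integration-by-parts argument exploiting the oscillation of $e^s$ along the vertical path, or direct evaluation for the explicit leading source term $\tfrac{73}{162 s^{7/2}}$.
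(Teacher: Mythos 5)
Your proposal matches the paper's proof essentially step for step: the same Banach space with the weighted norm $\sup_{t\in\Omega_1}|t^{7/2}f(t)|$, the same ball of radius $6/5$, the same splitting of $R_h$ into source, linear, quadratic and cubic pieces, estimated via Lemma \ref{lem:ineq}, and you correctly flag that the slowly decaying $\tfrac{73}{162 s^{7/2}}$ term requires special treatment via integration by parts on the vertical contour before applying \eqref{eq:ineq3} — which is exactly the device the paper uses.
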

\begin{proof}
We will prove the proposition using the contraction map theorem in the Banach space $\mathcal{S}_{1}$ of analytic functions in the interior of $\Omega_{1}$, continuous up to the boundary, equipped with the weighted norm
\[
||f||_{1}=\sup_{t\in\Omega_{1}}\left|t^{7/2}f(t)\right|.
\]

We now show that the operator $\mathcal{T}_{1}$ (see (\ref{eq:h1eq}) and (\ref{eq:h1eqint})) is a contractive map in a ball of size $\frac{6}{5}$ of $\mathcal{S}_{1}$. Since $\mathcal{T}_{1}$ clearly preserves analyticity and continuity, we only need to show two statements, namely,
\begin{enumerate}
  \item[(i)] if $||f||_{1}\leqslant\frac{6}{5}$, then $||\mathcal{T}_{1}(f)||_{1}\leqslant\frac{6}{5}$;
  \item[(ii)] $||\mathcal{T}_{1}(f_{1})-\mathcal{T}_{1}(f_{2})||_{1}\leqslant\lambda||f_{1}-f_{2}||_{1}$
for some $\lambda<1$.
\end{enumerate}
These follow from direct calculations and elementary estimates using Lemma \ref{lem:ineq}.

\paragraph{Proof of statement (i):}
We now estimate $\mathcal{L}_{1}(R_{h}(t,f(t)))$ in (\ref{eq:h1eqint}), assuming $||f||_{1}\leqslant\frac{6}{5}$. The term $\frac{73}{162t^{7/2}}$ in (\ref{eq:h1eq}) needs special care due to its slow decay, and we estimate it using (\ref{eq:ineq1}), (\ref{eq:ineq3}), and integration by parts:
\[
\left|e^{t}\int_{+\infty}^{t}\frac{73e^{-s}}{162s^{7/2}}ds\right|\leqslant\frac{73}{162|t|^{7/2}},
\]
\[
\left|e^{-t}\int_{-i\infty}^{t}\frac{73e^{s}}{162s^{7/2}}ds\right|
\leqslant\frac{73}{162|t|^{7/2}}+\left|\frac{7e^{-t}}{2}\int_{-i\infty}^{t}\frac{73e^{s}}{162s^{9/2}}ds\right|
\leqslant\frac{73\left(7\sqrt{\pi}\Gamma\left(\frac{7}{4}\right)+4\Gamma\left(\frac{9}{4}\right)\right)}{648|t|^{7/2}\Gamma\left(\frac{9}{4}\right)}.
\]

The rest of the terms in (\ref{eq:h1eq}) are estimated by adding the absolute values of all monomials in $1/\sqrt{t}$ and using (\ref{eq:ineq2}) and (\ref{eq:ineq3}). In summary, we have
\begin{multline*}
\left|t^{7/2}\mathcal{L}_{1}\left(\frac{73}{162t^{7/2}}-\frac{1}{1458t^{11/2}}\right)\right| \\
\leqslant\frac{73}{162}+\frac{511\sqrt{\pi}\Gamma\left(\frac{7}{4}\right)}{1296\Gamma\left(\frac{9}{4}\right)}+\frac{1}{|t|}\left(\frac{1}{13122}+\frac{\sqrt{\pi}\Gamma\left(\frac{9}{4}\right)}{5832\Gamma\left(\frac{11}{4}\right)}\right)
<\frac{41}{40},
\end{multline*}
\begin{multline*}
\left|t^{7/2}\mathcal{L}_{1}\left(\left|-\frac{17}{36t^{2}}+\frac{1}{54t^{4}}\right|f(t)\right)\right| \\
\leqslant\frac{6}{5}\left(\frac{1}{|t|^{3}}\left(\frac{\sqrt{\pi}\Gamma\left(\frac{13}{4}\right)}{216\Gamma\left(\frac{15}{4}\right)}+\frac{1}{702}\right)+\frac{1}{|t|}\left(\frac{17\sqrt{\pi}\Gamma\left(\frac{9}{4}\right)}{144\Gamma\left(\frac{11}{4}\right)}+\frac{17}{324}\right)\right)<\frac{41}{500},
\end{multline*}
\begin{multline*}
\left|t^{7/2}\mathcal{L}_{1}\left(\left(\frac{3}{2\sqrt{t}}-\frac{1}{6t^{5/2}}\right)f(t)^{2}\right)\right|\\
\leqslant\frac{36}{25}\left(\frac{1}{|t|^{5}}\left(\frac{1}{102}+\frac{\sqrt{\pi}\Gamma\left(\frac{17}{4}\right)}{24\Gamma\left(\frac{19}{4}\right)}\right)+\frac{1}{|t|^{3}}\left(\frac{3}{26}+\frac{3\sqrt{\pi}\Gamma\left(\frac{13}{4}\right)}{8\Gamma\left(\frac{15}{4}\right)}\right)\right)<\frac{3}{100},
\end{multline*}
and
\begin{align*}
\left|t^{7/2}\mathcal{L}_{1}\left(\frac{1}{2t}f(t)^{3}\right)\right|
\leqslant\frac{216}{125|t|^{7}}\left(\frac{1}{42}+\frac{\sqrt{\pi}\Gamma\left(\frac{21}{4}\right)}{8\Gamma\left(\frac{23}{4}\right)}\right)<10^{-4}.
\end{align*}
Adding up the above bounds we see that $$\left|t^{7/2}\mathcal{L}_{1}(R_{h}(t,h(t)))\right|<\frac{41}{40}+\frac{41}{500}+\frac{3}{100}+10^{-4}=1.1371<\frac{6}{5}.$$

\paragraph{Proof of statement (ii):}  We only need to do similar estimates
for the nonlinear terms in (\ref{eq:h1eq}) using (\ref{eq:ineq2}) and
(\ref{eq:ineq3}), as well as the simple facts that
\[
\left|f_{1}^{2}-f_{2}^{2}\right|\leqslant2||f_{1}-f_{2}||_{1}\frac{6}{5|t|^{7}}, \qquad\left|f_{1}^{3}-f_{2}^{3}\right|\leqslant3||f_{1}-f_{2}||_{1}\frac{36}{25|t|^{21/2}}.
\]

Straightforward calculations give us
\begin{multline*}
\left|t^{7/2}\mathcal{L}_{1}\left(\left(-\frac{17}{36t^{2}}+\frac{1}{54t^{4}}\right)(f_{1}(t)-f_{2}(t))\right)\right|\\
\leqslant\frac{1}{|t|^{3}}\left(\frac{\sqrt{\pi}\Gamma\left(\frac{13}{4}\right)}{216\Gamma\left(\frac{15}{4}\right)}+\frac{1}{702}\right)+\frac{1}{|t|}\left(\frac{17\sqrt{\pi}\Gamma\left(\frac{9}{4}\right)}{144\Gamma\left(\frac{11}{4}\right)}+\frac{17}{324}\right)<\frac{7}{100}||f_1-f_2||_1,
\end{multline*}
\begin{multline*}
\left|t^{7/2}\mathcal{L}_{1}\left(\left(\frac{3}{2\sqrt{t}}-\frac{1}{6t^{5/2}}\right)\left(f_{1}^{2}(t)-f_{2}^{2}(t)\right)\right)\right|\\
\leqslant\frac{6}{5}\left(\frac{1}{|t|^{5}}\left(\frac{1}{51}+\frac{\sqrt{\pi}\Gamma\left(\frac{17}{4}\right)}{12\Gamma\left(\frac{19}{4}\right)}\right)+\frac{1}{|t|^{3}}\left(\frac{3}{13}+\frac{3\sqrt{\pi}\Gamma\left(\frac{13}{4}\right)}{4\Gamma\left(\frac{15}{4}\right)}\right)\right)<\frac{1}{20}||f_1-f_2||_1,
\end{multline*}
and
\[
\left|t^{7/2}\mathcal{L}_{1}\left(\frac{1}{2t}\left(f_{1}(t)^{3}-f_{2}(t)^{3}\right)\right)\right|\leqslant\frac{36}{25|t|^{7}}\left(\frac{1}{14}+\frac{\sqrt{\pi}\Gamma\left(\frac{21}{4}\right)}{8\Gamma\left(\frac{23}{4}\right)}\right)<3\cdot10^{-4}||f_1-f_2||_1.
\]

Adding up the above bounds we see that
$$||\mathcal{T}_{1}(f_{1})-\mathcal{T}_{1}(f_{2})||_{1}
<\left(\frac{7}{100}+\frac{1}{20}+3\cdot10^{-4}\right)||f_{1}-f_{2}||_{1}<\frac{7}{50}||f_{1}-f_{2}||_{1}.$$

This completes the proof of Proposition \ref{prop:hp}.
\end{proof}
Now we define
\begin{equation*}\label{eq:hpdef}
h_{p}(t):=1-\frac{1}{9t^{2}}+\frac{h_{2}(t)}{t^{4}}, \qquad h_{2}(t):=t^{7/2}h_{1}(t).
\end{equation*}
It follows from Proposition \ref{prop:hp} that $|h_{2}(t)|\leqslant\frac{6}{5}$.
Clearly $h_{p}$ is pole-free in $\Omega_{1}$.

\subsection{Existence and uniqueness of the exponential correction $h_{e}$}

To analyze the exponential part of $h_{HM}$, we see from \eqref{eq:asy complex} that
\begin{equation}\label{eq:hmasymp}
y_{HM}\sim\sqrt{-x/2}+c_{-}(-x)^{-1/4}e^{-\frac{2\sqrt{2}}{3}(-x)^{3/2}}
\end{equation}
for $x\rightarrow(-1+\sqrt{3}i)\infty$, which means by \eqref{eq:changev1} that,
\begin{equation}\label{eq:hmasymp1}
h_{HM}\sim1+\frac{\sqrt{2}\tilde{c}e^{-t}}{\sqrt{t}}, \qquad \tilde{c}=\frac{2^{3/4} c_-}{\sqrt{3}}=\frac{i}{2\sqrt{3\pi}},
\end{equation}
as $t\to -i\infty$.

Thus we write $$h=h_{p}+\frac{\tilde{c}e^{-t}}{\sqrt{t}}h_{3},$$
and substitute this expression into (\ref{eq:hleft}), which gives the equation
\begin{equation}\label{eq:h3eq}
h_{3}''(t)-2h_{3}'(t)
=\left(\frac{3}{2}h_{p}(t)^{2}-\frac{5}{36t^{2}}-\frac{3}{2}\right)h_{3}(t)+\frac{3\tilde{c}e^{-t}h_{p}(t)h_{3}(t)^{2}}{2\sqrt{t}}+\frac{\tilde{c}^{2}e^{-2t}h_{3}(t)^{3}}{2t}
\end{equation}
with
$$h_{3}\sim\sqrt{2}\quad \textrm{as} \quad t\rightarrow-i\infty.$$
Based on the first few terms of the asymptotic expansion of $h_{3}$, we construct a quasi-solution
\begin{equation}\label{eq:ha}
h_{a}(t)=\frac{\tilde{c}^{3}e^{-3t}}{2t^{3/2}}+\frac{\tilde{c}^{2}e^{-2t}}{\sqrt{2}t}-\frac{41\tilde{c}e^{-t}}{36t^{3/2}}+\frac{\tilde{c}e^{-t}}{\sqrt{t}}-\frac{17}{36\sqrt{2}t}+\sqrt{2},
\end{equation}
and our goal is to show that there exists a solution to \eqref{eq:h3eq} of the form
\begin{equation}\label{eq:h3deco}
h_{3}=h_{a}+\delta_{1},
\end{equation}
where $\delta_{1}$ is small in a suitable norm. The equation for $\delta_{1}$ can be found by substituting \eqref{eq:h3deco} into (\ref{eq:h3eq}), which gives
\begin{multline}\label{eq:delta1}
\delta_{1}''(t)-2\delta_{1}'(t)=-R_{1}(t)+\text{\ensuremath{\delta_{1}}}(t)\left(\frac{3\tilde{c}^{2}e^{-2t}h_{a}(t)^{2}}{2t}+\frac{3\tilde{c}e^{-t}h_{p}(t)h_{a}(t)}{\sqrt{t}}+\frac{3}{2}h_{p}(t)^{2}-\frac{5}{36t^{2}}-\frac{3}{2}\right)\\
+\delta_{1}(t)^{2}\left(\frac{3\tilde{c}^{2}e^{-2t}h_{a}(t)}{2t}+\frac{3\tilde{c}e^{-t}h_{p}(t)}{2\sqrt{t}}\right)+\frac{\tilde{c}^{2}e^{-2t}\delta_{1}(t)^{3}}{2t}=:R_{d}(\text{\ensuremath{\delta_{1}}}(t),t),
\end{multline}
where
\begin{multline}\label{eq:r1}
R_{1}(t)=h_{a}''(t)-2h_{a}'(t)-\left(\frac{3}{2}h_{p}(t)^{2}-\frac{5}{36t^{2}}-\frac{3}{2}\right)h_{a}(t)\\
-\frac{3\tilde{c}e^{-t}h_{p}(t)h_{a}(t)^{2}}{2\sqrt{t}}-\frac{\tilde{c}^{2}e^{-2t}h_{a}(t)^{3}}{2t}.
\end{multline}

We obtain the following integral equation by inverting the operator on the left side of (\ref{eq:delta1}):
\begin{equation}\label{eq:delta1int}
\text{\ensuremath{\delta_{1}}}(t)=\mathcal{T}_{2}(\text{\ensuremath{\delta_{1}}(t)}):=\mathcal{L}_{2}(R_{d}(\text{\ensuremath{\delta_{1}}}(t),t)):=\int_{\infty}^{t}e^{2u}\int_{\infty}^{u}e^{-2s}R_{d}(\text{\ensuremath{\delta_{1}}}(s),s)dsdu.
\end{equation}

To estimate $\mathcal{L}_{2}$, we introduce a lemma similar to Lemma \ref{lem:ineq}.
\begin{lem}\label{lem:ineq2}
Assume $f$ is analytic in the right half plane with $|f(t)|\leqslant c/|t|^{n}$ where $c>0$ and $\re t\geqslant 0$.
For $n>1$ and $m>0$, we have the estimate
\begin{equation}
\left|\mathcal{L}_{2}(e^{-mt}f(t))\right|\leqslant\frac{ce^{-m \re t}}{m(m+2)|t|^{n}}. \label{eq:ineq1-1}
\end{equation}
For $n>2$ and $m\geqslant0$, we have the estimate
\begin{equation}
\left|\mathcal{L}_{2}(e^{-mt}f(t))\right|\leqslant\frac{ce^{-m \re t}}{(n-1)(n-2)|t|^{n-2}}.\label{eq:ineq2-1}
\end{equation}
\end{lem}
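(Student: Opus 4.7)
The plan is to reduce the double integral defining $\mathcal{L}_2$ to two successive applications of Lemma \ref{lem:ineq}. Write
\[
\mathcal{L}_2(e^{-mt}f(t)) = \int_\infty^t e^{2u} F(u)\,du, \qquad F(u) := \int_\infty^u e^{-(m+2)s}f(s)\,ds,
\]
so the first task is to estimate the inner function $F$, and then to estimate the outer integral against the weight $e^{2u}$.

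For the first bound \eqref{eq:ineq1-1}, assuming $m>0$, I would apply \eqref{eq:ineq1} to $F$ with parameter $m+2$ and exponent $n>1$, obtaining $|F(u)| \leqslant \frac{c\,e^{-(m+2)\re u}}{(m+2)|u|^n}$. Setting $G(u) := e^{(m+2)u}F(u)$, which is analytic in the right half plane with $|G(u)| \leqslant \frac{c}{(m+2)|u|^n}$, I would rewrite the outer integral as $\int_\infty^t e^{-mu}G(u)\,du$ and invoke \eqref{eq:ineq1} a second time with parameter $m>0$. This produces the factor $\frac{1}{m(m+2)|t|^n}$ and the exponential decay $e^{-m\re t}$ claimed in \eqref{eq:ineq1-1}.

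For the second bound \eqref{eq:ineq2-1}, which permits $m=0$, the same two-step scheme works with \eqref{eq:ineq2} in place of \eqref{eq:ineq1}, since \eqref{eq:ineq2} does not require positivity of the exponential parameter and costs only a factor $1/(n-1)$ at the price of one order of algebraic decay. Concretely, applying \eqref{eq:ineq2} to $F$ with parameter $m+2$ and exponent $n>2$ gives $|F(u)| \leqslant \frac{c\,e^{-(m+2)\re u}}{(n-1)|u|^{n-1}}$; applying it a second time to the outer integral with parameter $m\geqslant 0$ and exponent $n-1>1$ yields the advertised bound $\frac{c\,e^{-m\re t}}{(n-1)(n-2)|t|^{n-2}}$. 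The one point needing attention is the analyticity of the intermediate function $G$ in the right half plane, required before Lemma \ref{lem:ineq} can be reapplied; this follows from analyticity of $f$ together with the freedom, already used in the proof of Lemma \ref{lem:ineq}, to take the inner contour as a horizontal ray in the right half plane. Apart from this, the argument is routine bookkeeping and I do not anticipate any essential new difficulty.
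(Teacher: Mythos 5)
Your argument is correct and rests on the same underlying idea as the paper's: contour deformation to horizontal (for \eqref{eq:ineq1-1}) or radial (for \eqref{eq:ineq2-1}) paths, followed by elementary estimation. The paper deforms both contours at once and bounds the resulting double integral directly, whereas you invoke Lemma \ref{lem:ineq} twice, passing through the analytic intermediate function $G(u)=e^{(m+2)u}\int_\infty^u e^{-(m+2)s}f(s)\,ds$; this is simply the sequential version of the same computation, and since the bounds of Lemma \ref{lem:ineq} as proved require only $\re t\geqslant 0$ (not $t\in\Omega_1$), the reapplication to $G$ on the outer contour is justified.
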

\begin{proof}
Since $f$ is analytic, we can deform the integration paths into horizontal ones as in Lemma \ref{lem:ineq}. Denoting $t=a+bi$, where $a\geqslant0$ and $b\leqslant0$, we have
\begin{multline*}
\left|\mathcal{L}_{2}(e^{-mt}f(t))\right|\leqslant\left|\int_{\infty}^{a}e^{2u}\int_{\infty}^{u}e^{-(m+2)s}f(s+bi)dsdu\right|
\\
\leqslant\frac{c}{|t|^{n}}\left|\int_{\infty}^{a}e^{2u}\int_{\infty}^{u}e^{-(m+2)s}dsdu\right|=\frac{ce^{-ma}}{m(m+2)|t|^{n}}.
\end{multline*}

Alternatively, by deforming integration paths into radial ones, we have
\[
\mathcal{L}_{2}(e^{-mt}f(t))|\leqslant ce^{-ma}\int_{\infty}^{|t|}\int_{\infty}^{|u|}|f(s)|d|s|d|u|\leqslant\frac{ce^{-ma}}{(n-1)(n-2)|t|^{n-2}},
\]
which is \eqref{eq:ineq2-1}.
\end{proof}
We are then ready to prove
\begin{prop}\label{prop:delta1}
There is a unique solution of equation (\ref{eq:delta1})
satisfying $$|\delta_{1}(t)|\leqslant\frac{5}{2|t|^{2}}$$ in $\Omega_{1}$.
\end{prop}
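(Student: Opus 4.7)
The plan is to prove Proposition~\ref{prop:delta1} by a contraction-mapping argument closely parallel to that of Proposition~\ref{prop:hp}. I would work in the Banach space $\mathcal{S}_2$ of analytic functions in the interior of $\Omega_1$, continuous up to the boundary, with weighted norm
\[
\|f\|_2 := \sup_{t \in \Omega_1} |t^2 f(t)|,
\]
and show that the fixed-point operator $\mathcal{T}_2$ of \eqref{eq:delta1int} is a strict contraction on the closed ball of radius $5/2$. This reduces to verifying (i) $\|\mathcal{T}_2(f)\|_2 \leqslant 5/2$ whenever $\|f\|_2 \leqslant 5/2$, and (ii) a Lipschitz estimate with constant $<1$ on the same ball.

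The core of (i) is a numerical bound on the residual $R_1(t)$ from \eqref{eq:r1}. Because $h_a$ in \eqref{eq:ha} is constructed from the leading terms of the formal exponential--power series expansion of $h_3$, the main non-trivial work is to insert $h_a$ together with the representation $h_p = 1 - 1/(9 t^2) + h_2(t)/t^4$ of Proposition~\ref{prop:hp} into \eqref{eq:r1}, expand, and group the resulting monomials by exponential rate. I expect a finite sum $R_1(t) = \sum_{m,n} C_{m,n} e^{-m t}/t^n$ in which every purely algebraic term ($m=0$) has $n \geqslant 4$ and every exponential term has sufficient polynomial decay for Lemma~\ref{lem:ineq2} to apply: \eqref{eq:ineq1-1} handles the $m \geqslant 1$ pieces (gaining a $1/(m(m+2))$ factor) and \eqref{eq:ineq2-1} the algebraic pieces, yielding an explicit rational bound on $\|\mathcal{L}_2(R_1)\|_2$ in terms of $\Gamma$-values. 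For the remaining contributions to $R_d(f,t)$, one uses that the purely algebraic part of the coefficient of $f$ simplifies, thanks to Proposition~\ref{prop:hp}, as
\[
\tfrac{3}{2} h_p(t)^2 - \tfrac{5}{36 t^2} - \tfrac{3}{2} = -\tfrac{17}{36 t^2} + O(t^{-4}),
\]
whose product with $f$ is of size $O(1/|t|^4)$ and hence $O(1/|t|^2)$ after $\mathcal{L}_2$ via \eqref{eq:ineq2-1}; the cross terms with $e^{-t}/\sqrt{t}$ and $e^{-2t}/t$, together with the $f^2$ and $f^3$ contributions (which carry extra $e^{-t}$ and $e^{-2t}$ factors), are all strongly suppressed both by the $1/(m(m+2))$ prefactor and by $|t| \geqslant 3$. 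Summing these bounds numerically should close the estimate at $5/2$.

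The contraction estimate (ii) is entirely analogous: using $|f_1^k - f_2^k| \leqslant k\,(5/2)^{k-1} \|f_1 - f_2\|_2 / |t|^{2k}$ for $k = 2,3$, the same sequence of applications of Lemma~\ref{lem:ineq2} produces a numerical contraction constant which, as in Proposition~\ref{prop:hp}, one expects to be comfortably below~$1$. Uniqueness then follows from the contraction mapping theorem.

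The principal obstacle is the bookkeeping for $R_1$. The six-term quasi-solution $h_a$ carries four distinct exponential rates $0,-1,-2,-3$, so the products $h_p h_a^2$ and $h_a^3$ generate a large number of monomials; one must check both that the cancellations built into the coefficients of $h_a$ actually eliminate every leading-order term at each rate, and that the constants left over, combined with the $\Gamma$-function factors from Lemma~\ref{lem:ineq2} and with $\tilde{c} = i/(2\sqrt{3\pi})$, fit within the $5/2$ budget. Should the arithmetic prove too tight in any one scale, including one additional term in $h_a$ at that exponential rate would provide extra head-room without altering the structure of the argument.
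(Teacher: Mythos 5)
Your overall strategy is exactly the one the paper uses: the same Banach space $\mathcal{S}_2$ with norm $\|f\|_2 = \sup_{t\in\Omega_1}|t^2 f(t)|$, the same ball of radius $5/2$, and the same two-step contraction argument powered by Lemma~\ref{lem:ineq2}, with the residual $R_1$ split by exponential rate and the linear/quadratic/cubic pieces of $R_d$ estimated separately.

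There is, however, one concrete inaccuracy in your sketch that would cause the argument as written to break down. You assert that every purely algebraic ($m=0$) monomial in $R_1$ has decay $|t|^{-n}$ with $n\geqslant 4$, so that \eqref{eq:ineq2-1} maps it into $O(|t|^{-2})$. In fact, with $h_a$ as given in \eqref{eq:ha} and $h_p = 1 - \tfrac{1}{9t^2}+\tfrac{h_2(t)}{t^4}$, the leading non-exponential term of $R_1$ is $-\tfrac{1513}{1296\sqrt{2}\,t^{3}}$, i.e.\ $n=3$. Applying \eqref{eq:ineq2-1} to this term gives only an $O(|t|^{-1})$ bound, which does not fit into $\|\cdot\|_2$, so statement~(i) would fail. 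The paper handles this single term by a hybrid estimate: apply \eqref{eq:ineq1} (with $m=2$) to the inner integral so as to retain the full $|t|^{-3}$ decay, and then bound the outer integral along a radial path, which recovers $|\mathcal{L}_2(-\tfrac{1513}{1296\sqrt{2}t^{3}})| \leqslant \tfrac{1513}{5184\sqrt{2}\,|t|^{2}}$. Your fallback of adding an extra algebraic term of order $t^{-2}$ to $h_a$ would also cure this (pushing the residual to $t^{-4}$), but either way the issue has to be addressed; as stated your $n\geqslant 4$ expectation is wrong and the plain application of Lemma~\ref{lem:ineq2} to $R_1$ does not close. The rest of your outline (including the remark that $\tfrac{3}{2}h_p^2 - \tfrac{5}{36t^2} - \tfrac{3}{2} = -\tfrac{17}{36t^2}+O(t^{-4})$ and the Lipschitz bounds $|f_1^k-f_2^k|\leqslant k(5/2)^{k-1}\|f_1-f_2\|_2/|t|^{2k}$) matches the paper's computations.
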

\begin{proof}
The strategy is the same as in the proof of Proposition \ref{prop:hp}. We consider the Banach space $\mathcal{S}_{2}$ of analytic functions in $\Omega_{1}$, continuous up to the boundary, equipped with the weighted norm
\[
||f||_{2}=\sup_{t\in\Omega_{1}}\left|t^{2}f(t)\right|.
\]

We will prove that the operator $\mathcal{T}_{2}$ in (\ref{eq:delta1int}) is contractive in a ball of size $\frac{5}{2}$ of $\mathcal{S}_{2}$  with the help of Lemmas \ref{lem:ineq} and \ref{lem:ineq2}, by showing
\begin{enumerate}
  \item [(i)] if $||f||_{2}\leqslant\frac{5}{2}$, then $||\mathcal{T}_{2}(f)||_{2}\leqslant\frac{5}{2}$,
  \item [(ii)] $||\mathcal{T}_{2}(f_{1})-\mathcal{T}_{2}(f_{2})||_{2}\leqslant\lambda||f_{1}-f_{2}||_{2}$
for some $\lambda<1$.
\end{enumerate}

\paragraph{Proof of statement (i):}
We first estimate $R_{1}$ in (\ref{eq:r1}). Substituting the expression $$h_{p}(t)=1-\frac{1}{9t^{2}}+\frac{h_{2}(t)}{t^{4}}$$ with $h_{a}$ defined in (\ref{eq:ha}) into (\ref{eq:r1}),
we get an expression of the form
\[
R_{1}(t)=\sum_{k=4}^{11}\sum_{m=0}^{11}\frac{c_{k,m}e^{-mt}}{t^{k/2}}+h_2(t)\sum_{k=8}^{15}\sum_{m=0}^{7}\frac{\hat{c}_{k,m}e^{-mt}}{t^{k/2}}
+h_2(t)^2\sum_{k=16}^{19}\sum_{m=0}^{3}\frac{c_{k,m}e^{-mt}}{t^{k/2}},
\]
where $c_{k,m}$ and $\hat{c}_{k,m}$ are constants that can be written down explicitly, and in fact most of them are either zero or very small. For our purpose, it suffices to write out the terms with $k\leqslant7$ and estimate the rest crudely. Elementary calculations show that
$$R_{1}(t)=\tilde{R}_{1,1}(t)+\tilde{R}_{1,2}(t),$$
where
\begin{multline*}\label{eq:prince}
\tilde{R}_{1,1}(t)=-\frac{15\tilde{c}^{6}e^{-6t}}{2\sqrt{2}t^{3}}-\frac{29\tilde{c}^{5}e^{-5t}}{4t^{5/2}}+\frac{157\tilde{c}^{4}e^{-4t}}{12\sqrt{2}t^{3}}-\frac{6\sqrt{2}\tilde{c}^{4}e^{-4t}}{t^{2}}+\frac{359\tilde{c}^{3}e^{-3t}}{24t^{5/2}}+\frac{343\tilde{c}^{2}e^{-2t}}{288\sqrt{2}t^{3}}+\frac{47\tilde{c}^{2}e^{-2t}}{3\sqrt{2}t^{2}}\\
-\frac{9409\tilde{c}e^{-t}}{1728t^{5/2}}-\frac{27\tilde{c}^{7}e^{-7t}}{8t^{7/2}}
+\frac{33\tilde{c}^{5}e^{-5t}}{4t^{7/2}}-\frac{1927\tilde{c}^{3}e^{-3t}}{1728t^{7/2}}
-\frac{1609\tilde{c}e^{-t}}{324t^{7/2}}-\frac{1513}{1296\sqrt{2}t^{3}},
\end{multline*}
and
\begin{align}
\left|\tilde{R}_{1,2}(t)\right|&=\left|\sum_{k=8}^{11}\sum_{m=0}^{11}\frac{c_{k,m}e^{-mt}}{t^{k/2}}+h_2(t)\sum_{k=8}^{15}\sum_{m=0}^{7}\frac{\hat{c}_{k,m}e^{-mt}}{t^{k/2}}
+h_2(t)^2\sum_{k=16}^{19}\sum_{m=0}^{3}\frac{c_{k,m}e^{-mt}}{t^{k/2}}\right| \nonumber
\\
& \leqslant\sum_{k=8}^{11}\sum_{m=0}^{11}\frac{|c_{k,m}|}{|t|^{k/2}}+\frac{6}{5}\sum_{k=8}^{15}\sum_{m=0}^{7}\frac{|\hat{c}_{k,m}|}{|t|^{k/2}}
+\frac{36}{25}\sum_{k=16}^{19}\sum_{m=0}^{3}\frac{|c_{k,m}|}{|t|^{k/2}} \nonumber
\\
&<\frac{13}{10|t|^{9/2}}+\frac{6}{5|t|^{11/2}}+\frac{1}{10|t|^{13/2}}+\frac{1}{10|t|^{15/2}}+\frac{1}{2|t|^{17/2}}+\frac{1}{2|t|^{19/2}}
\nonumber \\
&~~~\qquad \qquad \qquad \qquad \qquad +\frac{4}{5|t|^{9}}+\frac{31}{10|t|^{8}}+\frac{1}{5|t|^{7}}+\frac{1}{|t|^{6}}+\frac{3}{2|t|^{5}}+\frac{26}{5|t|^{4}}. \nonumber 
\end{align}

To estimate $\mathcal{L}_{2}\left(\tilde{R}_{1,1}\right)$, we take the absolute value of each term in $\tilde{R}_{1,1}$, applying $\mathcal{L}_{2}$, and then adding them up. The last term $-\frac{1513}{1296\sqrt{2}t^{3}}$ in $\tilde{R}_{1,1}$ is special, and we use (\ref{eq:ineq1}) to estimate the inner integral and a radial path for the outer integral, which
gives
\begin{equation}
\left|\mathcal{L}_{2}\left(-\frac{1513}{1296\sqrt{2}t^{3}}\right)\right|
\leqslant\frac{1513}{2592\sqrt{2}}\int_{\infty}^{|t|}\frac{1}{|u|^{3}}d|u|\leqslant\frac{1513}{5184\sqrt{2}|t|^{2}}.\label{eq:special}
\end{equation}

For the other terms in $\mathcal{L}_{2}\left(\tilde{R}_{1,1}\right)$, we simply use (\ref{eq:ineq1-1}), which together with (\ref{eq:special}) implies
\begin{equation}\label{eq:l2r11}
\left|\mathcal{L}_{2}\left(\tilde{R}_{1,1}(t)\right)\right|<\frac{8}{25|t|^{5/2}}+\frac{7}{25|t|^{7/2}}+\frac{1}{250|t|^{3}}+\frac{1}{4|t|^{2}}.
\end{equation}

To estimate $\mathcal{L}_{2}\left(\tilde{R}_{1,2}\right)$, we use (\ref{eq:ineq2-1}) and obtain
\begin{multline}
\left|\mathcal{L}_{2}\left(\tilde{R}_{1,2}(t)\right)\right|<\frac{26}{175|t|^{5/2}}+\frac{8}{105|t|^{7/2}}+\frac{2}{495|t|^{9/2}}+\frac{2}{715|t|^{11/2}}+\frac{2}{195|t|^{13/2}}+\frac{2}{255|t|^{15/2}}\\
+\frac{1}{70|t|^{7}}+\frac{31}{420|t|^{6}}+\frac{1}{150|t|^{5}}+\frac{1}{20|t|^{4}}+\frac{1}{8|t|^{3}}+\frac{13}{15|t|^{2}}.\label{eq:l2r12}
\end{multline}

Combining (\ref{eq:l2r11})--(\ref{eq:l2r12}) and the fact that $|t|\geqslant 3$, we get
\begin{equation}\label{eq:ball0}
\left|\mathcal{L}_{2}\left(R_{1}(t)\right)\right|<\frac{8}{5|t|^{2}}.
\end{equation}

Now we assume $|f(t)|\leqslant\frac{5}{2|t|^{2}}$. We estimate the linear term in (\ref{eq:delta1}) in a similar way. Direct calculations using the definitions of $h_{p}$ and $h_{a}$ show that
\begin{equation*}\label{eq:hlinear}
\frac{3\tilde{c}^{2}e^{-2t}h_{a}(t)^{2}}{2t}+\frac{3\tilde{c}e^{-t}h_{p}(t)h_{a}(t)}{\sqrt{t}}+\frac{3}{2}h_{p}(t)^{2}-\frac{5}{36t^{2}}-\frac{3}{2}
=\tilde{R}_{1,3}(t)+\tilde{R}_{1,4}(t),
\end{equation*}
where
\[
\tilde{R}_{1,3}(t)=\frac{9\tilde{c}^{3}e^{-3t}}{\sqrt{2}t^{3/2}}+\frac{6\tilde{c}^{2}e^{-2t}}{t}
-\frac{17\tilde{c}e^{-t}}{12\sqrt{2}t^{3/2}}+\frac{3\sqrt{2}\tilde{c}e^{-t}}{\sqrt{t}},
\]
and
\[
\tilde{R}_{1,4}(t)=\sum_{k=4}^{8}\sum_{m=0}^{8}\frac{\hat{b}_{k,m}e^{-mt}}{t^{k/2}}+h_{2}(t)\sum_{k=8}^{12}\sum_{m=0}^{4}\frac{\hat{b}_{k,m}e^{-mt}}{t^{k/2}}+\frac{3h_{2}(t)^{2}}{2t^{8}}
\]
with $\hat{b}_{k,m}$ being certain constants. As before, after applying $\mathcal{L}_{2}$, we estimate $\tilde{R}_{1,3}$ using (\ref{eq:ineq1-1}), and $\tilde{R}_{1,4}$ by taking $|h_{2}(t)|\leqslant6/5$ and using (\ref{eq:ineq2-1}). This gives
\begin{equation}\label{eq:l2r13}
\left|t^{2}\mathcal{L}_{2}\left(\tilde{R}_{1,3}(t)f(t)\right)\right|
\leqslant\frac{5}{2}\left(\frac{3\tilde{c}^{3}}{5\sqrt{2}|t|^{3/2}}+\frac{3\tilde{c}^{2}}{4|t|}
+\frac{\sqrt{2}\tilde{c}}{|t|^{1/2}}+\frac{17\tilde{c}}{36\sqrt{2}|t|^{3/2}}\right),
\end{equation}
and
\begin{multline}\label{eq:l2r14}
\left|t^{2}\mathcal{L}_{2}\left(\tilde{R}_{1,4}(t)f(t)\right)\right|<\frac{5}{2}\bigg(\frac{3}{2000|t|^{3/2}}+\frac{1}{25|t|^{5/2}}+\frac{7}{1000|t|^{7/2}}+\frac{3}{100|t|^{6}}
+\frac{1}{50|t|^{4}}\\
+\frac{1}{250|t|^{3}}+\frac{19}{100|t|^{2}}+\frac{7}{10000|t|}+\frac{13}{1000\sqrt{|t|}}+\frac{11}{100}\bigg).
\end{multline}

Thus,
\begin{equation}\label{eq:balllin}
\left|t^{2}\mathcal{L}_{2}\left(\left(\tilde{R}_{1,3}(t)+\tilde{R}_{1,4}(t)\right)f(t)\right)\right|
<\frac{3}{10}\frac{5}{2}=\frac{3}{4}.
\end{equation}

The quadratic part of (\ref{eq:delta1int}) is also estimated using (\ref{eq:ineq2-1}). We have
\begin{align}\label{eq:ballquad}
& \left|t^{2}\mathcal{L}_{2}\left(\left|\frac{3\tilde{c}^{2}e^{-2t}h_{a}(t)}{2t}+\frac{3\tilde{c}e^{-t}h_{p}(t)}{2\sqrt{t}}\right|f(t)^{2}\right)\right|
\nonumber \\
& \leqslant \frac{25}{4}\bigg(\frac{12|\tilde{c}|}{325t^{9/2}}+\frac{2|\tilde{c}|}{297|t|^{5/2}}+\frac{41|\tilde{c}|^{3}}{594|t|^{5/2}}+\frac{|\tilde{c}|^{5}}{33|t|^{5/2}}+\frac{17|\tilde{c}|^{2}}{480\sqrt{2}|t|^{2}}
\nonumber
\\
& \qquad \qquad \qquad \qquad \qquad ~~~~~  +\frac{3|\tilde{c}|^{4}}{40\sqrt{2}|t|^{2}}+\frac{2|\tilde{c}|^{3}}{21|t|^{3/2}}+\frac{|\tilde{c}|^{2}}{4\sqrt{2}|t|}+\frac{6|\tilde{c}|}{35\sqrt{|t|}}\bigg)
\nonumber
\\
& <\frac{1}{50}\frac{25}{4}=\frac{1}{8}.
\end{align}

Finally we estimate the cubic term of (\ref{eq:delta1int}) by (\ref{eq:ineq2-1}):
\begin{equation}\label{eq:balltri}
\left|t^{2}\mathcal{L}_{2}\left(\frac{\tilde{c}^{2}e^{-2t}}{2t}f(t)^{3}\right)\right|\leqslant\frac{1}{720\pi t^{3}}\left(\frac{5}{2}\right)^{3}<3\cdot10^{-4}.
\end{equation}

Therefore, combing the results in (\ref{eq:ball0}), (\ref{eq:balllin}),
(\ref{eq:ballquad}), and (\ref{eq:balltri}), we have
\[
||f(t)||_{2}<\frac{5}{2}\Rightarrow\left\Vert \mathcal{L}_{2}(f(t))\right\Vert _{2}<\frac{5}{2}.
\]

\paragraph{Proof of statement (ii):}  To estimate the linear part of (\ref{eq:delta1int}), we still use (\ref{eq:l2r13}) and (\ref{eq:l2r14}), which gives
\begin{equation}\label{eq:hcontra}
\left|t^{2}\mathcal{L}_{2}\left(\left(\tilde{R}_{1,3}(t)+\tilde{R}_{1,4}(t)\right)(f_{1}(t)-f_{2}(t))\right)\right|
<\frac{3}{10}||f_{1}-f_{2}||_{2}.
\end{equation}

For the nonlinear parts, we use
\[
\left|f_{1}^{2}(t)-f_{2}^{2}(t)\right|\leqslant2||f_{1}-f_{2}||_{2}\frac{5}{2|t|^{4}},\qquad \left|f_{1}^{3}(t)-f_{2}^{3}(t)\right|\leqslant3||f_{1}-f_{2}||_{2}\frac{25}{4|t|^{6}},
\]
which gives us
\begin{align}\label{eq:ballquad-1}
&\left|t^{2}\mathcal{L}_{2}\left(\left(\frac{3\tilde{c}^{2}e^{-2t}h_{a}(t)}{2t}+\frac{3\tilde{c}e^{-t}h_{p}(t)}{2\sqrt{t}}\right)
\left(f_{1}^{2}(t)-f_{2}^{2}(t)\right)\right)\right|
\nonumber
\\
&<
5||f_{1}-f_{2}||_{2}\bigg(\frac{12|\tilde{c}|}{325t^{9/2}}+\frac{2|\tilde{c}|}{297|t|^{5/2}}+\frac{41|\tilde{c}|^{3}}{594|t|^{5/2}}+\frac{|\tilde{c}|^{5}}{33|t|^{5/2}}+\frac{17|\tilde{c}|^{2}}{480\sqrt{2}|t|^{2}}+\frac{3|\tilde{c}|^{4}}{40\sqrt{2}|t|^{2}}
\nonumber \\
&\qquad \qquad \qquad \qquad \qquad ~~~~~~~~~~+\frac{2|\tilde{c}|^{3}}{21|t|^{3/2}}+\frac{|\tilde{c}|^{2}}{4\sqrt{2}|t|}+\frac{6|\tilde{c}|}{35\sqrt{|t|}}\bigg)
\nonumber
\\
&<\frac{5}{50} ||f_{1}(t)-f_{2}(t)||_{2}=\frac{1}{10}||f_{1}(t)-f_{2}(t)||_{2},
\end{align}
and
\begin{multline}\label{eq:balltri-1}
\left|t^{2}\mathcal{L}_{2}\left(\left|\frac{\tilde{c}^{2}e^{-2t}}{2t}\right|\left(f_{1}^{3}(t)-f_{2}^{3}(t)\right)\right)\right|
\\
\leqslant
\frac{1}{240 \pi  |t|^3}\frac{25}{4}||f_{1}(t)-f_{2}(t)||_{2}<4\cdot10^{-4}||f_{1}(t)-f_{2}(t)||_{2}.
\end{multline}

Combining the results in (\ref{eq:hcontra}), (\ref{eq:ballquad-1}), and (\ref{eq:balltri-1}), we get
\[
\left\Vert \mathcal{L}_{2}(f_{1}(t))-\mathcal{L}_{2}(f_{2}(t))\right\Vert _{2}<\frac{1}{2}\left\Vert f_{1}(t)-f_{2}(t)\right\Vert_2.
\]
The conclusion of the proposition then follows from the contraction mapping principle.
\end{proof}

\subsection{Proof of Proposition \ref{prop:farleft}}
\begin{proof}
We define $$h_{e}(t)=\frac{\tilde{c}e^{-t}}{\sqrt{t}}(h_{a}(t)+\delta_{1}(t)).$$
By Proposition \ref{prop:delta1}, it is clear that $h_e$ is pole-free in $\Omega_{1}$ and $h_{e}(t)\sim \frac{\sqrt{2}\tilde{c}e^{-t}}{\sqrt{t}}$ for large $|t|$. We have now obtained a solution with the decomposition $h=h_{p}+h_{e}$, which implies that $h$ has the asymptotic behavior (\ref{eq:hmasymp1}),
corresponding to the asymptotics of the Hastings-McLeod solution in (\ref{eq:hmasymp}). Since it is known \cite{book} that the Hastings-McLeod solution is the only solution having this asymptotic behavior (see also the comments after Proposition \ref{prop:asy}), we see that $$h_{HM}=h_{p}+h_{e},$$ which implies by Propositions \ref{prop:hp} and \ref{prop:delta1} that $h_{HM}$ is pole-free in $\Omega_{1}$. By (\ref{eq:changev1}), this means $y_{HM}$ is pole-free in $\Omega_{0}$.
\end{proof}

\section{Analysis of $y_{HM}$ for $x\geqslant3$}\label{sec:farright}

Proposition \ref{prop:farleft} implies that $y_{HM}$ stays close to its truncated asymptotic expansion in $\Omega_{0}$. However, when $|x|$ becomes small, no asymptotic expansion can provide sufficient information about $y_{HM}$. Instead, we will have to reply on other methods, such as Taylor series and/or fitting numerical data, to build quasi-solutions with controlled error bounds. This requires knowledge of the initial value of $y_{HM}$ at a finite point. Although our previous result $h=h_{p}+h_{e}$ can provide initial conditions of $y_{HM}$, say, at $x=-\frac{3^{4/3}}{2}$, the error bound is much larger than $10^{-3}$, which is not sufficient. Instead, we will obtain an accurate initial condition at 0 using the asymptotic expansion of $y_{HM}$ at $+\infty$. On account of \eqref{eq:asy real}, we expect the asymptotic expansion to provide very accurate information of $y_{HM}$ even for relatively small $|x|$. Since the exponent in \eqref{eq:asy real} is different from that of (\ref{eq:hmasymp}), we need to use a different change of variable, namely,
\begin{equation}\label{eq:change2}
t=\frac{2}{3}x{}^{3/2};\;\qquad y(x)=\left(\frac{2}{3}\right)^{1/6}t^{1/3}h(t)
\end{equation}
to bring (\ref{eq:p2}) to the normalized form
\begin{equation}\label{eq:hright}
h''(t)+\frac{h'(t)}{t}-h(t)-\frac{h(t)}{9t^{2}}-\frac{4}{3}h(t)^{3}=0.
\end{equation}

Substituting $h(t)=\frac{e^{-t}}{2\sqrt{\pi}\sqrt{t}}h_{4}(t)$ into
(\ref{eq:hright}), we get
\begin{equation}\label{eq:hright1}
h_{4}''(t)-2h_{4}'(t)+\frac{5h_{4}(t)}{36t^{2}}-\frac{e^{-2t}h_{4}(t)^{3}}{3\pi t}=0.
\end{equation}
Based on asymptotic analysis of (\ref{eq:hright1}), we construct a quasi-solution given by
\begin{equation}\label{eq:hbdef}
h_{b}(t)=1-\frac{85085}{2239488t^{3}}+\frac{385}{10368t^{2}}-\frac{5}{72t}+\frac{e^{-2t}}{24\pi t}.
\end{equation}

Now we insert $$h_{4}(t)=h_{b}(t)+\delta_{2}(t)$$ into (\ref{eq:hright1}), and get the equation
\begin{multline}\label{eq:delta2}
\delta_{2}''(t)-2\delta_{2}'(t)=\delta_{2}(t)\left(\frac{e^{-2t}h_{b}(t)^{2}}{\pi t}-\frac{5}{36t^{2}}\right)+\frac{e^{-2t}h_{b}(t)\delta_{2}(t)^{2}}{\pi t}+\frac{e^{-2t}\delta_{2}(t)^{3}}{3\pi t}-R_{2}(t) \\=:R_{s}(\text{\ensuremath{\delta_{2}}}(t),t),
\end{multline}
where
\begin{equation}\label{eq:r2}
R_{2}(t)=h_{b}''(t)-2h_{b}'(t)+\frac{5h_{b}(t)}{36t^{2}}-\frac{e^{-2t}h_{b}(t)^{3}}{3\pi t}.
\end{equation}

We write (\ref{eq:delta2}) in integral form
\begin{equation}\label{eq:delta2int}
\delta_{2}(t)=\mathcal{T}_{3}(\text{\ensuremath{\delta_{2}}(t)})
:=\mathcal{L}_{2}(R_{s}(\text{\ensuremath{\delta_{2}}}(t),t)),
\end{equation}
where $\mathcal{L}_{2}$ is the same as defined in (\ref{eq:delta1int}).

The main result of this section is the following:
\begin{prop}\label{prop:delta2}
There is a unique solution of equation (\ref{eq:delta2})
satisfying
\begin{equation*}
|\delta_{2}(t)|\leqslant\frac{1}{80t{}^{2}}, \qquad |\delta_{2}'(t)|\leqslant\frac{11}{1000t^{2}},
\end{equation*}
for $t\geqslant2\sqrt{3}$.\end{prop}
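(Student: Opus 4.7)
The plan is to adapt the contraction-mapping framework of Proposition \ref{prop:delta1} to the present real-variable setting. Concretely, I would work in the Banach space $\mathcal{S}_3$ of continuous functions on $[2\sqrt{3}, \infty)$ equipped with the weighted norm $\|f\|_3 = \sup_{t \geq 2\sqrt{3}} |t^2 f(t)|$, and show that $\mathcal{T}_3$ in (\ref{eq:delta2int}) is a contractive self-map of the ball of radius $1/80$. The unique fixed point then supplies the desired $\delta_2$ with $|\delta_2(t)| \leq 1/(80 t^2)$, and the derivative bound follows a posteriori from the single-integral representation $\delta_2'(t) = e^{2t} \int_\infty^t e^{-2s} R_s(\delta_2(s), s)\,ds$ obtained by differentiating (\ref{eq:delta2int}).

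The first step is to control $R_2(t)$ defined in (\ref{eq:r2}). This is the crucial place where the particular truncation $h_b$ in (\ref{eq:hbdef}) matters: by construction the leading algebraic and exponential contributions in $h_b'' - 2 h_b' + 5 h_b/(36 t^2)$ cancel the leading terms of $e^{-2t} h_b^3/(3\pi t)$, leaving a residual whose dominant piece decays like $t^{-4}$. I would expand $R_2$ explicitly as a finite sum of terms $c_{k,m} e^{-mt}/t^k$, apply the real-line analog of Lemma \ref{lem:ineq2} to each monomial (using a radial path as in \eqref{eq:special} for any slowly decaying purely algebraic contribution), and sum up to conclude that $|t^2 \mathcal{L}_2(R_2)(t)|$ is comfortably smaller than $1/80$ on $[2\sqrt{3}, \infty)$.

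For the remaining terms of $R_s$ under the hypothesis $\|f\|_3 \leq 1/80$, I would estimate the linear contribution $f(t)\bigl(e^{-2t} h_b(t)^2/(\pi t) - 5/(36 t^2)\bigr)$ by pulling out the weight $t^{-2}$ from $f$ and applying the real versions of \eqref{eq:ineq1-1} and \eqref{eq:ineq2-1}; the quadratic and cubic parts are very small because each carries an additional factor $e^{-2t}/t$ and is moreover multiplied by powers of the tiny norm bound $1/80$. The contraction estimate repeats the same calculation with $f$ replaced by $f_1 - f_2$, using the elementary inequalities $|f_1^2 - f_2^2| \leq \|f_1 - f_2\|_3/(40 t^4)$ and $|f_1^3 - f_2^3| \leq 3\|f_1 - f_2\|_3/(6400 t^6)$. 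Because the nonlinear coefficients are already minuscule, the total contraction constant is dominated by the linear contribution and can be arranged strictly less than $1$.

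The bound on $\delta_2'$ is then obtained by inserting the fixed point $\delta_2$ back into the one-integral expression $\delta_2'(t) = e^{2t} \int_\infty^t e^{-2s} R_s(\delta_2(s), s)\,ds$ and repeating the term-by-term estimates, this time using the real-line analogs of \eqref{eq:ineq1} and \eqref{eq:ineq2}; the single integration (instead of two) produces one fewer factor of $1/t$, so each monomial in $R_s$ contributes at the $t^{-2}$ level, and summation yields the claimed constant $11/1000$. The main obstacle throughout is not conceptual but numerical: the target constants $1/80$ and $11/1000$ are tight relative to the number of explicit monomials produced by $h_b$, so each of the many small pieces in $R_2$ and in the linear kernel must be bounded with care — in particular, isolating the single slowly-decaying algebraic contribution and treating it separately via a radial path, just as in the estimate \eqref{eq:special}, to keep the accumulated constant inside the prescribed ball.
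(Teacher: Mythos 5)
Your proposal follows essentially the same route as the paper: the Banach space $\mathcal{S}_3$ with the weighted $t^2$-norm, a contraction-mapping argument in the ball of radius $1/80$ for $\mathcal{T}_3$, expansion of $R_2$ into monomials $c_{k,m}e^{-mt}/t^k$ estimated via the real-line versions of Lemma \ref{lem:ineq2}, and a posteriori estimation of $\delta_2'$ via the differentiated single-integral representation with Lemma \ref{lem:ineq}. The only minor discrepancy is that you anticipate a slowly-decaying purely algebraic term in $R_2$ requiring a radial-path workaround as in \eqref{eq:special}; in fact the choice of $h_b$ in \eqref{eq:hbdef} makes the algebraic parts cancel through order $t^{-3}$, so the paper's $\tilde{R}_{2,2}$ starts at $t^{-4}$ and \eqref{eq:ineq2-1} applies directly with no special treatment needed.
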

\begin{proof}
The proof is very similar to that of Proposition \ref{prop:delta1}, but simpler since there is no power series part and $t$ is real positive. We consider the Banach space $\mathcal{S}_{3}$ of continuous functions in $[2\sqrt{3},+\infty)$ equipped with the weighted norm
\[
||f||_{3}=\sup_{t\geqslant2\sqrt{3}}\left|t^{2}f(t)\right|.
\]
We will prove that the operator $\mathcal{T}_{3}$ in (\ref{eq:delta2int}) is contractive in a ball of size $\frac{1}{80}$ of $\mathcal{S}_{3}$ using Lemmas \ref{lem:ineq} and \ref{lem:ineq2}, by showing
\begin{enumerate}
  \item [(i)] if $||f||_{3}\leqslant\frac{1}{80}$, then $||\mathcal{T}_{3}(f)||_{3}\leqslant\frac{1}{80}$,
  \item [(ii)] $||\mathcal{T}_{3}(f_{1})-\mathcal{T}_{3}(f_{2})||_{3}\leqslant\lambda||f_{1}-f_{2}||_{3}$
for some $\lambda<1$.
\end{enumerate}
Note that the estimates (\ref{eq:ineq1}), (\ref{eq:ineq2}), (\ref{eq:ineq1-1}) and (\ref{eq:ineq2-1}) are obviously true for continuous functions on the real line.

\paragraph{Proof of statement (i):}  We first estimate $R_{2}$ in (\ref{eq:r2}). Substituting the definition of $h_{b}$ given in (\ref{eq:hbdef}) into (\ref{eq:r2}), we get an expression of the form $$R_{2}(t)=\tilde{R}_{2,1}(t)+\tilde{R}_{2,2}(t),$$
where
\begin{equation}\label{eq:r21tt}
\tilde{R}_{2,1}(t)=\frac{163e^{-2t}}{3456\pi t^{3}}+\frac{5e^{-4t}}{864\pi^{2}t^{3}}-\frac{e^{-6t}}{576\pi^{3}t^{3}}-\frac{e^{-4t}}{24\pi^{2}t^{2}}+\frac{23e^{-2t}}{72\pi t^{2}},
\end{equation}
and
\begin{multline}\label{eq:r22tt}
\left|\tilde{R}_{2,2}(t)\right|=\left|\sum_{k=4}^{10}\sum_{m=0}^{8}\frac{d_{k,m}e^{-mt}}{t^{k}}\right|\leqslant\sum_{k=4}^{10}\sum_{m=0}^{8}\frac{|d_{k,m}|e^{-2\sqrt{3}m}}{t^{k}}\\
<\frac{12}{25t^{5}}+10^{-6}\left(\frac{3}{500t^{10}}+\frac{1}{50t^{9}}+\frac{1}{20t^{8}}+\frac{3}{5t^{7}}+\frac{1}{t^{6}}+\frac{15}{t^{4}}\right).
\end{multline}

By (\ref{eq:ineq1-1}), we obtain the estimate
\begin{equation}\label{eq:l2r21}
\left|t^{2}\mathcal{L}_{2}\left(\tilde{R}_{2,1}(t)\right)\right|\leqslant\frac{e^{-4t}}{576\pi^{2}}+\frac{163e^{-2t}}{27648\pi t}+\frac{5e^{-4t}}{20736\pi^{2}t}+\frac{e^{-6t}}{27648\pi^{3}t}+\frac{23e^{-2t}}{576\pi},
\end{equation}
and from (\ref{eq:ineq2-1}), we have the estimate
\begin{equation}\label{eq:l2r22}
\left|t^{2}\mathcal{L}_{2}\left(\tilde{R}_{2,2}(t)\right)\right|<\frac{1}{25t}
+10^{-7}\left(\frac{1}{1200t^{6}}+\frac{1}{280t^{5}}+\frac{1}{84t^{4}}+\frac{1}{5t^{3}}+\frac{1}{2t^{2}}+25\right).
\end{equation}

Combining (\ref{eq:l2r21}) and (\ref{eq:l2r22}) and note that $t\geqslant2\sqrt{3}$,
we get
\begin{equation}\label{eq:ballr0}
\left|t^{2}\mathcal{L}_{2}(R_{2}(t))\right|<\frac{3}{250}.
\end{equation}

Now we assume $||f(t)||_{3}\leqslant1/80$. To analyze the linear term in (\ref{eq:delta2int}), we simply use the definition of $h_{b}$ in (\ref{eq:hbdef}) to obtain
\[
\frac{e^{-2t}h_{b}(t)^{2}}{\pi t}-\frac{5}{36t^{2}}=\tilde{R}_{2,3}(t)+\tilde{R}_{2,4}(t),
\]
where
\begin{equation}\label{eq:r23def}
\tilde{R}_{2,3}(t)=-\frac{5}{36t^{2}}-\frac{5e^{-2t}}{36\pi t^{2}}+\frac{e^{-4t}}{12\pi^{2}t^{2}}+\frac{e^{-2t}}{\pi t},
\end{equation}
and
\[
|\tilde{R}_{2,4}(t)|=\left|\sum_{k=3}^{7}\sum_{m=2}^{4}\frac{\tilde{d}_{k,m}e^{-mt}}{t^{k}}\right|
<10^{-6}\left(\frac{1}{2t^{7}}+\frac{1}{t^{6}}+\frac{11}{5t^{5}}+\frac{51}{2t^{4}}+\frac{25}{t^{3}}\right).
\]

For the term $-\frac{5}{36t^{2}}$ in (\ref{eq:r23def}), we use (\ref{eq:ineq1}) once and (\ref{eq:ineq2}) once to obtain
\[
\left|t^{2}\mathcal{L}_{2}\left(\left(\frac{5}{36t^{2}}f(t)\right)\right)\right|\leqslant\frac{1}{80}\frac{5}{216t}.
\]

Using (\ref{eq:ineq1-1}) to estimate the rest of the terms in (\ref{eq:r23def}),
we obtain
\begin{equation}\label{eq:l2r23}
\left|t^{2}\mathcal{L}_{2}\left(\tilde{R}_{2,3}(t)f(t)\right)\right|\leqslant\frac{1}{80}\left(\frac{5e^{-2t}}{288\pi t^{2}}+\frac{e^{-4t}}{288\pi^{2}t^{2}}+\frac{e^{-2t}}{8\pi t}+\frac{5}{216t}\right).
\end{equation}
By (\ref{eq:ineq2-1}), we obtain the estimate
\begin{equation}\label{eq:l2r24}
\left|t^{2}\mathcal{L}_{2}\left(\tilde{R}_{2,4}(t)f(t)\right)\right|
<\frac{10^{-6}}{80}\left(\frac{1}{112t^{5}}+\frac{1}{42t^{4}}+\frac{11}{150t^{3}}+\frac{51}{40t^{2}}+\frac{25}{12t}\right).
\end{equation}

Combining (\ref{eq:l2r23}) and (\ref{eq:l2r24}), we get
\begin{equation}\label{eq:ballr1}
\left|t^{2}\mathcal{L}_{2}\left(\left(\frac{e^{-2t}h_{b}(t)^{2}}{\pi t}-\frac{5}{36t^{2}}\right)f(t)\right)\right|<\frac{1}{80}\frac{1}{125}=10^{-4}.
\end{equation}

The quadratic and cubic terms of (\ref{eq:delta2int}) can be estimated using (\ref{eq:ineq2-1}). We have
\begin{equation}\label{eq:ballr2}
\left|t^{2}\mathcal{L}_{2}\left(\frac{e^{-2t}h_{b}(t)}{\pi t}f(t)^{2}\right)\right|<10^{-6}\left(\frac{e^{-2t}}{20t^{4}}+\frac{7e^{-2t}}{100t^{3}}+\frac{e^{-2t}}{5t^{2}}+\frac{e^{-4t}}{25t^{2}}+\frac{5e^{-2t}}{t}\right)<2\cdot10^{-9}
\end{equation}
and
\begin{equation}\label{eq:ballr3}
t^{2}\mathcal{L}_{2}\left(\frac{e^{-2t}f(t)^{3}}{3\pi t}\right)\leqslant\frac{e^{-2t}}{46080000\pi t^{3}}<10^{-12}.
\end{equation}

Combining (\ref{eq:ballr0}), (\ref{eq:ballr1}), (\ref{eq:ballr2}), and (\ref{eq:ballr3}), we find
\[
||f(t)||_{3}\leqslant\frac{1}{80}\Rightarrow||\mathcal{T}_{3}(f(t))||_{3}\leqslant\frac{1}{80}.
\]

\paragraph{Proof of statement (ii):}  The estimate for the linear part of
(\ref{eq:delta2int}) is very similar to (\ref{eq:ballr1}), which
gives
\begin{equation}\label{eq:contrar1}
\left|t^{2}\mathcal{L}_{2}\left(\left(\frac{e^{-2t}h_{b}(t)^{2}}{\pi t}-\frac{5}{36t^{2}}\right)(f_{1}(t)-f_{2}(t))\right)\right|
<\frac{1}{125}||f_{1}(t)-f_{2}(t)||_{3}.
\end{equation}

For the nonlinear parts, we use
\[
\left|f_{1}^{2}-f_{2}^{2}\right|\leqslant2||f_{1}-f_{2}||_{3}\frac{1}{80t{}^{4}},\qquad \left|f_{1}^{3}-f_{2}^{3}\right|\leqslant3||f_{1}-f_{2}||_{3}\frac{1}{6400t^{6}},
\]
and (\ref{eq:ineq2-1}) to obtain the estimates
\begin{align}
&\left|t^{2}\mathcal{L}_{2}\left(\frac{e^{-2t}h_{b}(t)}{\pi t}\left(f_{1}(t)^{2}-f_{2}(t)^{2}\right)\right)\right|
\nonumber
\\
&<10^{-5}\left(\frac{4e^{-2t}}{5t^{4}}+\frac{e^{-2t}}{t^{3}}+\frac{3e^{-2t}}{t^{2}}+\frac{3e^{-4t}}{5t^{2}}+\frac{80e^{-2t}}{t}\right)\left\Vert f_{1}(t)-f_{2}(t)\right\Vert _{3}
\nonumber \\
&<10^{-6}\left\Vert f_{1}(t)-f_{2}(t)\right\Vert _{3}, \label{eq:contrar2}
\end{align}
\begin{equation}\label{eq:contrar3}
\left|t^{2}\mathcal{L}_{2}\left(\frac{e^{-2t}}{3\pi t}(f_{1}(t)^{3}-f_{2}(t)^{3})\right)\right|\leqslant\frac{e^{-2t}\left\Vert f_{1}(t)-f_{2}(t)\right\Vert _{3}}{192000\pi t^{3}}<10^{-10}\left\Vert f_{1}(t)-f_{2}(t)\right\Vert _{3}.
\end{equation}

Combining (\ref{eq:contrar1}), (\ref{eq:contrar2}) and (\ref{eq:contrar3}),
we see that
\[
\left\Vert \mathcal{L}_{2}(f_{1}(t))-\mathcal{L}_{2}(f_{2}(t))\right\Vert _{3}<\frac{1}{120}\left\Vert f_{1}(t)-f_{2}(t)\right\Vert _{3}.
\]

To estimate $\delta_{2}'(t)$, we first differentiate (\ref{eq:delta2int})
once to get
\[
\delta_{2}'(t)=\mathcal{L}'_{2}\left(R_{s}(\text{\ensuremath{\delta_{2}}}(t),t)\right)
=e^{2t}\int_{\infty}^{t}e^{-2s}R_{s}(\text{\ensuremath{\delta_{2}}}(s),s))ds.
\]

The integral can be estimated by using (\ref{eq:ineq1}) and (\ref{eq:ineq2}).
In particular, from (\ref{eq:r21tt}) and (\ref{eq:r22tt}), we get
\begin{multline}\label{eq:ldr2}
\left|t^{2}\mathcal{L}'_{2}(R_{2}(t))\right|<\frac{e^{-4t}}{144\pi^{2}}+\frac{163e^{-2t}}{13824\pi t}+\frac{5e^{-4t}}{5184\pi^{2}t}+\frac{e^{-6t}}{4608\pi^{3}t}+\frac{23e^{-2t}}{288\pi}\\
+\frac{3}{25t^{2}}+10^{-7}\left(\frac{1}{150t^{7}}+\frac{1}{40t^{6}}+\frac{1}{14t^{5}}+\frac{1}{t^{4}}+\frac{2}{t^{3}}+\frac{50}{t}\right)<\frac{21}{2000}.
\end{multline}

The rest can be estimated crudely. First we take absolute value of each term in (\ref{eq:hbdef}) to get a bound $|h_{b}|<21/20$. This means
\begin{multline*}
\left|\delta_{2}(t)\left(\frac{e^{-2t}h_{b}(t)^{2}}{\pi t}-\frac{5}{36t^{2}}\right)+\frac{e^{-2t}h_{b}(t)\delta_{2}(t)^{2}}{\pi t}+\frac{e^{-2t}\delta_{2}(t)^{3}}{3\pi t}\right|
\\
<3\cdot10^{-7}\frac{e^{-2t}}{t^{7}}+\frac{6\cdot10^{-5}e^{-2t}}{t^{5}}+\frac{1}{576t^{4}}+\frac{e^{-2t}}{200t^{3}}.
\end{multline*}
Thus, by (\ref{eq:ineq2}), we have
\begin{multline}\label{eq:ldrest}
\left|t^{2}\mathcal{L}'_{2}\left(\delta_{2}(t)\left(\frac{e^{-2t}h_{b}(t)^{2}}{\pi t}-\frac{5}{36t^{2}}\right)+\frac{e^{-2t}h_{b}(t)\delta_{2}(t)^{2}}{\pi t}+\frac{e^{-2t}\delta_{2}(t)^{3}}{3\pi t}\right)\right|
\\
<10^{-7}\frac{e^{-2t}}{2t^{4}}+\frac{3\cdot10^{-5}e^{-2t}}{2t^{2}}+\frac{1}{1728t}+\frac{e^{-2t}}{400}<18\cdot10^{-5}.
\end{multline}

Therefore, from (\ref{eq:ldr2}) and (\ref{eq:ldrest}), we have
\[
\left|t^{2}\mathcal{L}'_{2}(R_{2}(\delta_{2}(t),t))\right|<\frac{21}{2000}+18\cdot10^{-5}<\frac{11}{1000}.
\]
This completes the proof of Proposition \ref{prop:delta2}.
\end{proof}

As a consequence of the above proposition, it follows
\begin{cor}\label{yhm3}
For $y_{HM}$ we have the estimates
\[
\left|y_{HM}(3)-\frac{4}{607}\right|<8\cdot10^{-6},\qquad \left|y_{HM}'(3)+\frac{64}{5375}\right|<24\cdot10^{-6}.
\]
\end{cor}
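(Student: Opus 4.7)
The plan is to specialize Proposition~\ref{prop:delta2} to the point $t=2\sqrt{3}$, which under the change of variables (\ref{eq:change2}) corresponds exactly to $x=3$, and then propagate the resulting bounds on $h_4=h_b+\delta_2$ and $h_4'=h_b'+\delta_2'$ through the algebraic relations back to $y_{HM}(3)$ and $y_{HM}'(3)$. From Proposition~\ref{prop:delta2} and the fact that $(2\sqrt{3})^2 = 12$, I would first record the elementary consequences
\[
|\delta_2(2\sqrt{3})| \leq \frac{1}{960}, \qquad |\delta_2'(2\sqrt{3})| \leq \frac{11}{12000}.
\]

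Next I would substitute $t=2\sqrt{3}$ into the explicit formula (\ref{eq:hbdef}) for $h_b(t)$ and into the termwise derivative $h_b'(t)$. Both values are exactly computable rationals (plus an exponentially negligible contribution from the $e^{-2t}/(24\pi t)$ term, which at $t=2\sqrt{3}$ is of order $e^{-4\sqrt{3}}\approx 10^{-3}$ before the further scaling). I would then unwind the substitution $h(t)=\frac{e^{-t}}{2\sqrt{\pi}\sqrt{t}}h_4(t)$ together with (\ref{eq:change2}) to write
\[
y_{HM}(3) = \frac{e^{-2\sqrt{3}}}{2\sqrt{\pi}\,3^{1/4}}\, h_4(2\sqrt{3}),
\]
using the identity $(2/3)^{1/6}(2\sqrt{3})^{1/3-1/2} = 3^{-1/4}$. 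For the derivative, differentiating (\ref{eq:change2}) and applying the chain rule with $dt/dx = x^{1/2}$ yields
\[
y_{HM}'(3) = \sqrt{3}\,(2/3)^{1/6}\Bigl[\tfrac{1}{3}(2\sqrt{3})^{-2/3} h(2\sqrt{3}) + (2\sqrt{3})^{1/3} h'(2\sqrt{3})\Bigr],
\]
with $h'(2\sqrt{3})$ expressed in terms of $h_4(2\sqrt{3})$ and $h_4'(2\sqrt{3})$ by differentiating $h=\frac{e^{-t}}{2\sqrt{\pi}\sqrt{t}}h_4$.

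The central numerical claims are that the two rationals $4/607$ and $-64/5375$ are the best simple rational approximations within the required tolerance to the exactly computable quantities that arise from the $h_b$ and $h_b'$ contributions. Once these are pinned down, the remaining error is controlled by the scaling prefactor $e^{-2\sqrt{3}}/(2\sqrt{\pi}\,3^{1/4})\approx 6.7\times 10^{-3}$ multiplying $|\delta_2(2\sqrt{3})|\le 1/960$, giving roughly $7\times 10^{-6}$, safely under $8\times 10^{-6}$, and similarly for the derivative.

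The main obstacle is purely the bookkeeping of precision. Both target tolerances are of order $10^{-5}$, so every step — the round-off in $h_b(2\sqrt{3})$ and $h_b'(2\sqrt{3})$, the neglect of the exponentially small $e^{-2t}/(24\pi t)$ piece, and the propagation of the $\delta_2$ and $\delta_2'$ bounds through the two prefactors — must be carried to sufficient accuracy and added conservatively to stay within the stated bounds $8\times 10^{-6}$ and $24\times 10^{-6}$. No conceptual difficulty beyond Proposition~\ref{prop:delta2} is involved; the corollary is essentially a careful arithmetic evaluation.
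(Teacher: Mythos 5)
Your proposal is correct and is essentially identical to the paper's own argument: the paper likewise uses the uniqueness of the Hastings--McLeod solution to write $y_{HM}(x)=\frac{e^{-\frac{2}{3}x^{3/2}}}{2\sqrt{\pi}x^{1/4}}\bigl(h_b(\tfrac{2}{3}x^{3/2})+\delta_2(\tfrac{2}{3}x^{3/2})\bigr)$, bounds $|\delta_2|$ and $|\delta_2'|$ at $t=2\sqrt{3}$ via Proposition~\ref{prop:delta2}, and then splits the error at $x=3$ into the rational-approximation part and the $\delta_2$-part by the triangle inequality, exactly as you outline. The only point worth making explicit (which the paper does) is the uniqueness step identifying the solution built from $h_b+\delta_2$ with $y_{HM}$; and note the $e^{-2t}/(24\pi t)$ term in $h_b$ is of order $3\cdot 10^{-6}$ inside $h_b$, so it is not discarded but simply absorbed into the exact evaluation of $h_b(2\sqrt{3})$ — its final contribution after the $\sim 6.7\times10^{-3}$ prefactor is indeed negligible.
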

\begin{proof}
We first note that since $y_{HM}$ is the unique solution satisfying \eqref{eq:asy real}, Proposition \ref{prop:delta2} and (\ref{eq:change2})
implies
\[
y_{HM}(x)=
\frac{e^{-\frac{2}{3}x^{3/2}}}{2\sqrt{\pi}x^{1/4}}\left(h_{b}\left(\frac{2}{3}x{}^{3/2}\right)
+\delta_{2}\left(\frac{2}{3}x{}^{3/2}\right)\right).
\]

Thus, it follows that
\begin{equation}\label{eq:estnum1}
\left|y_{HM}(x)-\frac{e^{-\frac{2}{3}x^{3/2}}}{2\sqrt{\pi}x^{1/4}}h_{b}\left(\frac{2}{3}x{}^{3/2}\right)\right|
\leqslant\frac{e^{-\frac{2}{3}x^{3/2}}}{2\sqrt{\pi}x^{1/4}}\left|\delta_{2}\left(\frac{2}{3}x{}^{3/2}\right)\right|
\leqslant\frac{9e^{-\frac{2}{3}x^{3/2}}}{640\sqrt{\pi}x^{13/4}},
\end{equation}
and
\begin{align}\label{eq:estnum2}
&\left|y_{HM}'(x)-\left(\frac{e^{-\frac{2}{3}x^{3/2}}}{2\sqrt{\pi}x^{1/4}}h_{b}\left(\frac{2}{3}x{}^{3/2}\right)\right)'\right|
\nonumber \\
&\leqslant\frac{e^{-\frac{2}{3}x^{3/2}}\left(4|\delta_{2}\left(\frac{2}{3}x^{3/2}\right)|x^{3/2}+
|\delta_{2}\left(\frac{2}{3}x{}^{3/2}\right)|+4|\delta_{2}'\left(\frac{2}{3}x^{3/2}\right)|x^{3/2}\right)}{8\sqrt{\pi}x^{5/4}}
\nonumber \\
&\leqslant\frac{9e^{-\frac{2}{3}x^{3/2}}\left(188x^{3/2}+25\right)}{64000\sqrt{\pi}x^{17/4}}.
\end{align}

Plugging the expression (\ref{eq:hbdef}) and the value $x=3$ into (\ref{eq:estnum1}) and (\ref{eq:estnum2}), we get
\begin{multline*}
\left|y_{HM}(3)-\frac{4}{607}\right|
\\
\leqslant \left|y_{HM}(3)-\frac{e^{-\frac{2}{3}3^{3/2}}}{2\sqrt{\pi}3^{1/4}}h_{b}\left(\frac{2}{3}3^{3/2}\right)\right|
+\left|\frac{4}{607}-\frac{e^{-\frac{2}{3}3^{3/2}}}{2\sqrt{\pi}3^{1/4}}h_{b}\left(\frac{2}{3}3^{3/2}\right)\right|
<8\cdot10^{-6},
\end{multline*}
and
\begin{multline*}
\left|y_{HM}'(3)+\frac{64}{5375}\right|\leqslant
\left|y_{HM}'(3)-\left(\frac{e^{-\frac{2}{3}x^{3/2}}}{2\sqrt{\pi}x^{1/4}}h_{b}\left(\frac{2}{3}x{}^{3/2}\right)\right)'\bigg|_{x=3}\right|\\
+\left|\frac{64}{5375}+\left(\frac{e^{-\frac{2}{3}x^{3/2}}}{2\sqrt{\pi}x^{1/4}}h_{b}\left(\frac{2}{3}x{}^{3/2}\right)\right)'\bigg|_{x=3}\right|<24\cdot10^{-6},
\end{multline*}
as desired.
\end{proof}

\section{Analysis of $y_{HM}$ for $0\leqslant x\leqslant3$}\label{sec:origin}

When studying $y_{HM}$ near the origin, it is more convenient to
use equation (\ref{eq:p2}) directly, without any change of variable. Since the region is finite, it is convenient to construct a quasi-solution using a simple polynomial. To construct the polynomial, we first numerically solve the equation \eqref{eq:p2} using the initial conditions $y(3)=\frac{4}{607}$ and $y'(3)=-\frac{64}{5375}$, which are close to the true values for $y_{HM}$ according to Corollary \ref{yhm3}. Then we fit the numerical solution using a polynomial of degree 11 under the maximum norm (similar to Chebyshev polynomials), and approximate the numerical coefficients of the polynomial with rational numbers to ensure mathematical rigor. As a result, we have

\begin{multline*}
y_{a}(x)=\frac{t^{11}}{1929701}-\frac{t^{10}}{625758}-\frac{t^{9}}{192428}+\frac{t^{8}}{27779}-\frac{t^{7}}{23450}-\frac{13t^{6}}{44056}+\frac{90t^{5}}{64211}\\
-\frac{19t^{4}}{21788}-\frac{125t^{3}}{9667}+\frac{1535t^{2}}{28314}-\frac{2759t}{28279}+\frac{1413}{19685},
\end{multline*}
where $t=x-\frac{3}{2}$.

Plugging $y_{a}$ into (\ref{eq:p2}), we get a remainder
\[
R_{3}(x)=y_{a}''(x)-2y_{a}(x)^{3}-xy_{a}(x),
\]
which is also a polynomial. We first need to show that this remainder is small for $0\leqslant x\leqslant3$.
\begin{rem} \label{polyest}
{\it Estimating a real polynomial $P(x)$ on an interval $[a,b]$ rigorously and with good accuracy is elementary. Here we use the simple method in \cite{lpm,dub}. We choose a suitable partition of $[a,b]$, $$\Pi=\{x_{0},x_{1},...,x_{n-1},x_{n}\},$$
where $x_{0}=a,x_{n}=b$, then write $$x=(x_{i}+x_{i-1})/2+u$$ on each subinterval $[x_{i-1},x_{i}]$ for $i=1,...,n,$ and re-expand $P$ to obtain a polynomial in $u$. The polynomial in $u$ is estimated by taking the extremum of the cubic sub-polynomial and bounding the rest terms by the sum of their absolute values. To be precise, if
\[
P(x)=\sum_{k=0}^{n}c_{k}\left(x-\frac{x_{i}+x_{i-1}}{2}\right)^{k},
\]
then we have
\[
\left|P(\frac{x_{i}+x_{i-1}}{2}+u)-\sum_{k=0}^{3}c_{k}u^{k}\right|\leqslant\sum_{k=4}^{n}|c_{k}|\left|\frac{x_{i}-x_{i-1}}{2}\right|^{k}
,\qquad |u|\leqslant \frac{|x_{i}-x_{i-1}|}{2}.
\]

This technique can also be used to show $P_{1}(x)\leqslant P_{2}(x)$
as it is equivalent to showing $P_{1}(x)-P_{2}(x)\leqslant0$.}
\end{rem}
We choose the partition $\{0,1/4,3/5,6/5,9/5,12/5,14/5,3\}$. By Remark
\ref{polyest}, it is easy to show that
\begin{equation}\label{eq:r3bound}
|R_{3}(x)|<18\cdot10^{-6}.
\end{equation}
Plugging $$y_{HM}=y_{a}+\delta_{3}$$ into (\ref{eq:p2}), we get
\begin{equation}\label{eq:delta3}
\delta_{3}''(x)=\delta_{3}(x)\left(6y_{a}(x)^{2}+x\right)+6\delta_{3}(x)^{2}y_{a}(x)+2\delta_{3}(x)^{3}-R_{3}(x)
=:F_{1}(\delta_{3}(x),x).
\end{equation}
Equation (\ref{eq:delta3}), together with the initial conditions $\delta_{3}(3)=y_{HM}(3)-y_{a}(3),~\delta_{3}'(3)=y_{HM}'(3)-y_{a}'(3)$
guarantee that $\delta_{3}=y_{HM}-y_{a}$. By Corollary \ref{yhm3}
and direct calculation, we have
\begin{equation*}\label{eq:delta3d}
\left|\delta_{3}(3)\right|<85\cdot10^{-7},\qquad \left|\delta_{3}'(3)\right|<241\cdot10^{-7}.
\end{equation*}
It is possible to show that $\delta_{3}$ is small by constructing
approximate Green's functions and using the contraction mapping principle
as in \cite{lpm,dub}. However, here we will use a simpler method
relying only on elementary inequalities. The idea is to construct
an explicit function satisfying a ``stronger'' ODE, so that $\delta_{3}$
must be bounded by that function. This is essentially the same idea as in Gr\"{o}nwall's inequality, where we replace the exponential bound by a more general function in order to obtain more accurate estimates for nonlinear equations. To be precise, we have the following
lemma.
\begin{lem}[Global existence of ODE solutions] \label{lem:estbound}
Consider the equation $$u''(x)=F(u(x),x)$$
on the interval $[a,b]$ with initial conditions $$u(a)=\alpha, \qquad u'(a)=\tilde{\alpha},$$
where $F$ is Lipschitz continuous in $\overline{B(\alpha,\beta)}\times[a,b]$ (here $\overline{B(\alpha,\beta)}$ denotes
the closed ball centered at $\alpha$ with radius $\beta$ in the complex plane). Suppose there exists an integrable function $G$ such that $$G(t,x)\geqslant|F(t,x)|$$
in $\overline{B(\alpha,\beta)}\times[a,b]$ and $G(t,x)$ is increasing
in $|t|$ (i.e., if $|t_{1}|\geqslant|t_{2}|$, then $G(t_{1},x)\geqslant G(t_{2},x)$).
Furthermore, suppose there exists a function $y$ such that
$$y(a)>|\alpha|,\quad y'(a)\geqslant|\tilde{\alpha}|,\quad y''(x)\geqslant G(y(x),x).$$
Then there exists a unique solution $u(x)$ on $[a,b]$ such that
$$\left|u(x)\right|<y(x) \quad \textrm{and} \quad \left|u'(x)\right|\leqslant y'(x)$$
for all $x\in[a,b]$. Furthermore, if the conditions $y(a)>|u(a)|,\: y'(a)\geqslant \left|u'(a)\right|$
are replaced with $y(b)>|u(b)|,\: y'(b)\leqslant-\left|u'(b)\right|$, the conclusions
still hold.
\end{lem}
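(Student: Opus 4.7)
The plan is to establish the lemma by a standard continuation-plus-comparison argument, which can be viewed as a Gr\"onwall-type inequality adapted to a second-order scalar ODE. First I would recast $u''=F(u,x)$ as a first-order system in $(u,u')$ and invoke Picard-Lindel\"of: the Lipschitz hypothesis on $F$ in $\overline{B(\alpha,\beta)}\times[a,b]$ yields a unique local solution on some maximal interval $[a,c)\subseteq[a,b]$ along which $u(x)\in\overline{B(\alpha,\beta)}$. The entire work then reduces to showing $c=b$ together with the comparison inequalities, and this is done by a bootstrap.

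The key step is to define
\[
T:=\sup\{\tau\in[a,c):|u(x)|<y(x)\text{ and }|u'(x)|\leqslant y'(x)\text{ on }[a,\tau]\},
\]
which is strictly larger than $a$ because $|u(a)|=|\alpha|<y(a)$. For $x\in[a,T]$, integrating the equation twice from $a$ and applying, in succession, the triangle inequality, the domination $|F(u,s)|\leqslant G(u(s),s)$, the monotonicity $G(u(s),s)\leqslant G(y(s),s)$ coming from $|u(s)|<y(s)$, and finally the hypothesis $G(y(s),s)\leqslant y''(s)$, I obtain
\begin{align*}
|u'(x)|&\leqslant|\tilde{\alpha}|+\int_a^x y''(s)\,ds\leqslant y'(a)+\bigl(y'(x)-y'(a)\bigr)=y'(x),\\
|u(x)|&\leqslant|\alpha|+\int_a^x|u'(s)|\,ds<y(a)+\bigl(y(x)-y(a)\bigr)=y(x),
\end{align*}
where the strict inequality for $|u|$ follows from $|\alpha|<y(a)$. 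At $x=T$ this yields strict $|u(T)|<y(T)$, so by local existence and continuity the same bounds persist on a slightly larger interval, contradicting the maximality of $T$ unless $T=c$. The same estimates give $|u(x)-\alpha|\leqslant y(x)-y(a)$, which (provided $y$ is chosen so that this quantity stays $\leqslant\beta$) prevents $u$ from escaping the Lipschitz ball, forcing $c=b$.

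The backward case, with hypotheses imposed at $x=b$, is handled by running the same argument in reverse. The observation that makes it work is that $y''\geqslant G\geqslant 0$ forces $y'$ to be nondecreasing, so the assumption $y'(b)\leqslant-|u'(b)|\leqslant 0$ implies $y'(x)\leqslant 0$ throughout $[a,b]$; then $-y'$ plays in the estimates the role that $y'$ played in the forward direction when integrating from $b$ down to $x$.

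The main technical obstacle I anticipate is the bookkeeping around ensuring $u$ does not leave the Lipschitz ball $\overline{B(\alpha,\beta)}$ before the bootstrap can close. This is what allows the Picard extension at $T$ to proceed, and it requires tracking $|u(x)-\alpha|$ (not merely $|u(x)|$) at the continuation step. Since the bound $|u(x)-\alpha|\leqslant y(x)-y(a)$ falls out of the same integration, the containment reduces to the (implicit) requirement, met in every application of the lemma in this paper, that the dominating function $y$ satisfies $y(b)-y(a)\leqslant\beta$; with that in hand the argument is entirely elementary.
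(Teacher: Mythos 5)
Your proposal is correct and follows essentially the same continuation-plus-comparison argument as the paper: a sup-of-good-points (exit time) bootstrap, the chain $|F|\leqslant G(u,\cdot)\leqslant G(y,\cdot)\leqslant y''$ integrated twice, and the backward case by reversal; the paper merely packages the chained inequalities through auxiliary functions $u_1,y_1$ rather than stating them inline. Your additional remark that one must keep $u$ inside the Lipschitz ball $\overline{B(\alpha,\beta)}$ (e.g.\ via $y(b)-y(a)\leqslant\beta$) is a latent hypothesis that the paper's proof leaves implicit but that is indeed met in each application.
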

\begin{proof}
Local existence and uniqueness of a solution $u$ near $x=a$ follows
from Picard's Existence Theorem (cf. \cite{cod}). We define $$u_{1}(x)=|\alpha|+\left|\tilde{\alpha}\right|(x-a)+\int_{a}^{x}\int_{a}^{t}G(u(s),s)dsdt.$$
By straightforward calculation, we have
\begin{multline*}
\left|u(x)\right|=\left|\alpha+\tilde{\alpha}(x-a)+\int_{a}^{x}\int_{a}^{t}F(u(s),s)ds dt\right|
\\
\leqslant|\alpha|+\left|\tilde{\alpha}\right|(x-a)+\int_{a}^{x}\int_{a}^{t}|F(u(s),s)|dsdt\leqslant u_{1}(x).
\end{multline*}

Let $$y_{1}(x)=y(a)+y'(a)(x-a)+\int_{a}^{x}\int_{a}^{t}G(y(s),s)dsdt.$$
By direct calculation, we see that
$$y_{1}(a)=y(a),\quad y_{1}'(a)=y'(a),\quad y_{1}''(x)\leqslant y''(x),$$
which means $y_{1}(x)\leqslant y(x)$. Let $b_{1}=\sup_{c\in[a,b]}\{y(x)>|u(x)|,x\in[a,c]\}$.
For $a\leqslant x\leqslant b_{1}$, we have $$\int_{a}^{x}\int_{a}^{t}G(u(s),s)dsdt\leqslant\int_{a}^{x}\int_{a}^{t}G(y(s),s)dsdt,$$
because $G(t,x)$ is increasing in $|t|$. Since $y(a)>|u(a)|$, using
definitions of $u_{1}$ and $y_{1}$, we see that $$u_{1}(x)<y_{1}(x)$$
for all $a\leqslant x\leqslant b_{1}$. Now suppose $b_{1}<b$. For every $x<b_{1}$, Picard's Existence Theorem implies that the solution $u$ exists in $[x,\min(x+\beta/\sup|F|,b)]$. Thus the solution $u$ exists for all $x<\min(b_{1}+\beta/\sup|F|,b)$. Then by continuity,
we would have $u_{1}(x)<y_{1}(x)$ for $x\in[a,b_{1}+\delta]$ and for some $\delta>0$, which would contradict with the definition of $b_{1}$. Hence, $$|u(x)|\leqslant u_{1}(x)<y_{1}(x)\leqslant y(x)$$ for all $x\in[a,b]$.

To estimate $u'$, note that
\begin{multline*}
\left|u'(x)\right|=\left|\tilde{\alpha}+\int_{a}^{x}F(u(s),s)ds\right|\leqslant\left|\tilde{\alpha}\right|+\int_{a}^{x}G(u(s),s)ds
\\
\leqslant y'(a)+\int_{a}^{x}G(y(s),s)ds \leqslant y'(x),
\end{multline*}
where we integrated the known inequality $y''(x)\geqslant G(y(x),x)$
in the last step.

Finally, if conditions $y(a)>|u(a)|,\: y'(a)\geqslant\left|u'(a)\right|$ are
replaced with $y(b)>|u(b)|,\: y'(b)\leqslant-\left|u'(b)\right|$, we can simply
make the change of variable $x\rightarrow-x$ and apply the previous result.
\end{proof}

Now we are ready to prove the following result:
\begin{prop}\label{prop:yhm0}
We have the estimates
$$\left|y_{HM}(0)-\frac{98}{267}\right|<11\cdot10^{-4},
\qquad \left|y_{HM}'(0)+\frac{153}{518}\right|<12\cdot10^{-4}.$$
\end{prop}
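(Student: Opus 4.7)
The plan is to apply Lemma \ref{lem:estbound} in its backward form on $[0,3]$ to the equation \eqref{eq:delta3} for $\delta_3 = y_{HM} - y_a$, using the boundary estimates
\begin{equation*}
|\delta_3(3)| < 85 \cdot 10^{-7}, \qquad |\delta_3'(3)| < 241 \cdot 10^{-7}
\end{equation*}
supplied by Corollary \ref{yhm3}. This produces pointwise bounds on $|\delta_3(0)|$ and $|\delta_3'(0)|$, which combined with the explicit (rational) values of $y_a(0)$ and $y_a'(0)$ via the triangle inequality yield the proposition.

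First, I would use the polynomial-estimation technique of Remark \ref{polyest} to produce explicit constants $M_0$ and $\Lambda$ with $|y_a(x)| \leq M_0$ and $6 y_a(x)^2 + x \leq \Lambda$ on $[0,3]$; numerically one expects $M_0 \approx 0.4$ and $\Lambda \approx 4$. Then $F_1$ in \eqref{eq:delta3} is majorized on $[0,3]$ by
\begin{equation*}
G(u, x) := \Lambda |u| + 6 M_0 |u|^2 + 2 |u|^3 + 18 \cdot 10^{-6},
\end{equation*}
which is monotone in $|u|$ and has the form required by Lemma \ref{lem:estbound}.

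Next, I would construct an explicit positive majorant $y$ on $[0,3]$ satisfying $y(3) > |\delta_3(3)|$, $-y'(3) \geq |\delta_3'(3)|$, and $y''(x) \geq G(y(x), x)$. The natural building block is a hyperbolic ansatz
\begin{equation*}
y(x) = \alpha \cosh(\omega(b-x)) + \frac{\beta}{\omega} \sinh(\omega(b-x))
\end{equation*}
on a subinterval $[a,b]$, with $\omega^2$ chosen slightly greater than the local supremum of $6 y_a^2 + x$ so that the slack $(\omega^2 - 6 y_a^2 - x)\, y$ absorbs both the forcing $18 \cdot 10^{-6}$ and the quadratic/cubic nonlinearities (valid because $y$ will remain small). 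If a single $\omega$ on $[0,3]$ does not yield a tight enough bound, I would iterate over a partition such as $\{0, 3/2, 3\}$: on $[3/2, 3]$ the coefficient $6 y_a^2 + x$ is noticeably smaller than on $[0, 3/2]$, so a smaller local $\omega$ is permitted and the amplification $\cosh(\omega \Delta x)$ is reduced accordingly.

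Applying Lemma \ref{lem:estbound} backward then gives $|\delta_3(x)| < y(x)$ and $|\delta_3'(x)| \leq |y'(x)|$ on $[0,3]$. Evaluating the explicit polynomial $y_a$ at $x=0$ shows that $|y_a(0) - 98/267|$ and $|y_a'(0) + 153/518|$ are tiny exact rationals (well below $10^{-4}$), so the triangle inequality closes the estimate. The main obstacle is engineering $(\omega, \alpha, \beta)$ and, if needed, the partition so that the cumulative amplification factor — roughly $\cosh(3\omega)$ with $\omega^2 \gtrsim 4$, hence of order $10^2$ — does not blow the boundary data (of order $10^{-5}$) past the target tolerance $11 \cdot 10^{-4}$; this is essentially the only source of looseness in the argument, and a two-step subdivision should comfortably suffice.
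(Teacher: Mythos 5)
Your overall strategy is the same as the paper's: use Lemma~\ref{lem:estbound} in its backward form on $[0,3]$ with the boundary data from Corollary~\ref{yhm3}, build a comparison function $G$ from a bound $|R_3|<18\cdot 10^{-6}$, and propagate a supersolution back to $x=0$. The paper, however, keeps $G(t,x)=|t|(6y_a(x)^2+x)+6y_a(x)|t|^2+2|t|^3+18\cdot 10^{-6}$ with its genuine $x$-dependence and verifies the supersolution inequality $y_1''\geq G(y_1,\cdot)$ for an explicit degree-$8$ polynomial $y_1$ via Remark~\ref{polyest}; you replace the coefficients by uniform constants $\Lambda,M_0$ and propose piecewise hyperbolic supersolutions. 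Both are legitimate incarnations of the same comparison argument.

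The concrete gap is in your handling of the constant forcing $18\cdot 10^{-6}$. You assert that ``the slack $(\omega^2-6y_a^2-x)\,y$ absorbs both the forcing $18\cdot 10^{-6}$ and the quadratic/cubic nonlinearities (valid because $y$ will remain small)'' --- but smallness of $y$ makes absorption of a \emph{constant} \emph{harder}, not easier. Near $x=3$ the supersolution must satisfy $y(3)\approx 9\cdot 10^{-6}$, so $(\omega^2-\Lambda)\,y\geq 18\cdot 10^{-6}$ forces $\omega^2-\Lambda\gtrsim 2$ there, roughly doubling $\omega^2$ compared with ``slightly above the local supremum.'' This pushes the amplification up: a back-of-envelope propagation with two (and even three or four) subintervals and constant $\Lambda_i$ on each lands well above $1.1\cdot 10^{-3}$ for $y(0)$, whereas the paper's polynomial $y_1$ gives $y_1(0)\approx 1.07\cdot 10^{-3}$, leaving only about $3\cdot 10^{-5}$ of room for $|y_a(0)-98/267|$. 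In short, the claim that ``a two-step subdivision should comfortably suffice'' is not justified; the target bound is tight enough that one must either account for the forcing via a genuine particular-solution correction, use many more subintervals, or (as the paper does) build the supersolution directly as a polynomial that tracks the variable coefficient $6y_a(x)^2+x$ closely and check the inequality by Remark~\ref{polyest}. The idea is sound but the quantitative budget as sketched does not close.
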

\begin{proof}
Clearly $F_{1}$ in (\ref{eq:delta3}) is Lipschitz continuous in
every bounded region. Let $$G(t,x)=|t|\left(6y_{a}(x)^{2}+x\right)+6y_{a}(x)|t|{}^{2}+2|t|^{3}+18\cdot10^{-6}.$$
Since $y_{a}$ is a polynomial, by using Remark \ref{polyest} with
partition $\{0,1,2,3\}$, we easily see that $y_{a}$ is positive on
$[0,3]$. Therefore $G(t,x)$ is increasing in $|t|$. By (\ref{eq:r3bound}),
we see that $$G(t,x)\geqslant\left|F_{1}(t,x)\right|,\qquad 0\leqslant x\leqslant3.$$
Now we set
\begin{multline*}
y_{1}(x)=\frac{s^{8}}{55140149}-\frac{s^{7}}{15591646}-\frac{s^{6}}{35492113}+\frac{s^{5}}{526490476}+\frac{s^{4}}{144870}\\
-\frac{3s^{3}}{63886}+\frac{7s^{2}}{46477}-\frac{43s}{172565}+\frac{5}{29696},
\end{multline*}
where $s=x-\frac{3}{2}$.

By direct calculation, we see
$$y_{1}(3)>9\cdot10^{-7}>|\delta_{3}(3)|\quad \textrm{and} \quad y_{1}'(3)<-245\cdot10^{-6}<-\left|\delta_{3}'(3)\right|.$$ Since $y_{1}''(x)-G(y_{1}(x),x)$
is a polynomial, using Remark \ref{polyest} with partition $\{0,1,2,3\}$,
we see that $y_{1}''(x)-G(y_{1}(x),x)\geqslant0$. Therefore,
by Lemma \ref{lem:estbound},
we have $$|\delta_{3}(x)|\leqslant y_{1}(x)\quad \textrm{and} \quad \left|\delta_{3}'(x)\right|\leqslant y_{1}'(x).$$
In particular, one has
\[
\left|y_{HM}(0)-\frac{98}{267}\right|\leqslant\left|y_{a}(0)-\frac{98}{267}\right|+|\delta_{3}(0)|\leqslant\left|y_{a}(0)-\frac{98}{267}\right|+y_{1}(0)<11\cdot10^{-4},
\]
and
\[
\left|y_{HM}'(0)+\frac{153}{518}\right|\leqslant\left|y_{a}'(0)+\frac{153}{518}\right|+|\delta_{3}'(0)|\leqslant\left|y_{a}'(0)+\frac{153}{518}\right|+|y_{1}'(0)|<12\cdot10^{-4}.
\]
\end{proof}

\section{Analysis of $y_{HM}$ in the finite domain $\Omega_{2}$}\label{sec:left0}

In this section we will show that $y_{HM}$ is pole-free in $\Omega_{2}$, where $\Omega_{2}$
is defined in \eqref{def:omega2}. This result, combined with Proposition \ref{prop:farleft}, will be sufficient to
prove Theorem \ref{main}. Our strategy is still to construct a quasi-solution and to use Lemma \ref{lem:estbound} to bound the error. However, since $\Omega_{2}$ is a sector in the complex plane, we first make a remark about
estimating complex rational functions.
\begin{rem}\label{polycest}
{\it Assume $\Omega_{p}$ is a closed polygonal domain
in the complex plane and $$F(z)=\frac{P(z)}{Q(z)},$$
where $P,Q$ are complex polynomials and $Q$ has no zero in $\Omega_{p}$. To estimate the
modulus of $F$, we note that since $F$ is analytic in $\Omega_{p}$,
by the maximum modulus principle, it is sufficient to estimate its
modulus along the boundary $\partial\Omega_{p}$. Since $\Omega_{p}$
is a polygon, its boundary consists of line segments. On each line
segment $[z_1,z_2]$, we have $z=z_{1}+z_{2}t,~0\leqslant t\leqslant1$, and we note
that
\begin{equation}\label{eq:complexineq}
\left|\frac{P(z_{1}+z_{2}t)}{Q(z_{1}+z_{2}t)}\right|\leqslant M\Leftrightarrow|P(z_{1}+z_{2}t)|^{2}\leqslant M^{2}|Q(z_{1}+z_{2}t)|^{2}.
\end{equation}

Since $|P(z_{1}+z_{2}t)|^{2}$ and $|Q(z_{1}+z_{2}t)|^{2}$ are both
real polynomials in $t$, (\ref{eq:complexineq}) could be proved
using the method in Remark \ref{polyest}.}
\end{rem}

In view of Remark \ref{polycest}, we consider the right triangular domain $\tilde{\Omega}_{2}$ with vertices at $0,-\frac{9}{2\sqrt{3}},-\frac{9}{4\sqrt{3}}+\frac{9}{4}i$. It
is easily seen that $\Omega_{2}\subseteq\tilde{\Omega}_{2}$, as illustrated in figure \ref{left00}.

\begin{figure}
\centering
\begin{overpic}[scale=0.8]{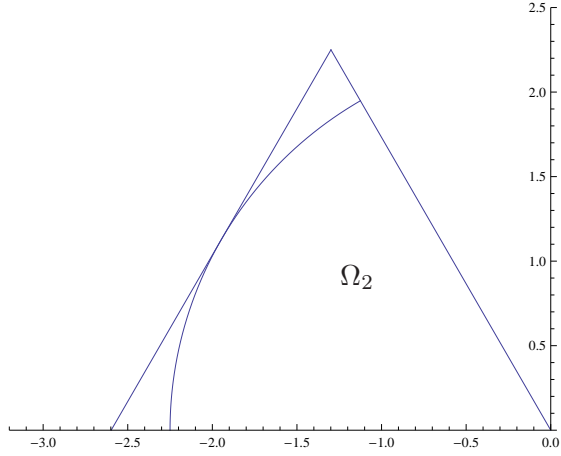}
\put(60,30){$\Omega_2$}
\end{overpic}
\caption{Sector $\Omega_{2}$ and triangular region $\tilde{\Omega}_{2}$.}
\label{left00}
\end{figure}

In order to find a quasi-solution in the complex region $\tilde{\Omega}_{2}$, we solve \eqref{eq:p2} numerically using initial conditions given in Proposition \ref{prop:yhm0} and use least-squares polynomial approximations, which results in the quasi-solution
\begin{multline}\label{eq:ybdef}
y_{b}(x)=\frac{x^{15}}{13206825}+\frac{x^{14}}{717099}+\frac{x^{13}}{81755}+\frac{x^{12}}{15201}+\frac{11x^{11}}{47200}+\frac{13x^{10}}{24088}+\frac{39x^{9}}{53333}\\
+\frac{18x^{8}}{61523}-\frac{17x^{7}}{20578}-\frac{93x^{6}}{35396}-\frac{224x^{5}}{30615}-\frac{360x^{4}}{36911}+\frac{203x^{3}}{10806}+\frac{33530x^{2}}{688889}-\frac{153x}{518}+\frac{98}{267},
\end{multline}

As usual, we need to estimate the remainder
\[
R_{4}(x)=y_{b}''(x)-2y_{b}(x)^{3}-xy_{b}(x).
\]
We note that by symmetry every polynomial or rational function $F(z)$
with real coefficients only needs to be estimated on two edges $$\Sigma_{1}:z=\frac{9}{4}\left(-\frac{2}{\sqrt{3}}+\left(\frac{1}{\sqrt{3}}+i\right)t\right) \quad
\textrm{and} \quad \Sigma_{2}:z=\frac{9}{4}\left(-\frac{1}{\sqrt{3}}+i\right)t,$$
where $0\leqslant t\leqslant1$.
This is because by the maximum modulus principle $F$ only need to
be estimated on the boundary of $\tilde{\Omega}_{2}\cup\tilde{\Omega}_{2}^{c}$,
where $\tilde{\Omega}_{2}^{c}$ is complex conjugate of $\tilde{\Omega}_{2}$,
namely the reflection of $\tilde{\Omega}_{2}$ with respect to the
real axis. However, since $|F(z)|=|F(\bar{z})|$, it is sufficient
to estimate $F$ on the two edges in the upper half plane.

We take the partition $\{0,2/5,4/5,14/15,1\}$ for the edge $\Sigma_{1}$,
and the partition \newline
$\{0,1/3,3/4,14/15,1\}$ for the edge $\Sigma_{2}$.
Using Remark \ref{polycest}, we see that
\[
|R_{4}(x)|^{2}<3\cdot10^{-5}\textrm{ on }\Gamma_{1,2}\Rightarrow|R_{4}(x)|<\frac{3}{500}\textrm{ on }\tilde{\Omega}_{2}.
\]

Now we are ready to show the following
\begin{prop}\label{prop:left0}
The Hastings-solution $y_{HM}$
satisfies the bound $$|y_{HM}(x)-y_{b}(x)|<6/5$$
in $\Omega_{2}$. In particular, $y_{HM}$ is pole-free in $\Omega_{2}$.
\end{prop}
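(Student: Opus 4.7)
My plan is to follow the same scheme as in the proof of Proposition \ref{prop:yhm0}, but adapted to the two-dimensional sector $\tilde{\Omega}_2 \supseteq \Omega_2$ by reducing to radial paths from the origin. First I would set $\delta_4(x) := y_{HM}(x) - y_b(x)$ and substitute $y_{HM} = y_b + \delta_4$ into \eqref{eq:p2}, obtaining the ODE
\[
\delta_4''(x) = \delta_4(x)\bigl(6y_b(x)^2 + x\bigr) + 6y_b(x)\delta_4(x)^2 + 2\delta_4(x)^3 - R_4(x) =: F_2(\delta_4(x),x),
\]
whose initial data at $0$ are controlled by Proposition \ref{prop:yhm0}: $|\delta_4(0)| < 11\cdot 10^{-4}$ and $|\delta_4'(0)| < 12\cdot 10^{-4}$. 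Since $|R_4| < 3/500$ on $\tilde{\Omega}_2$ has already been established, both the forcing term and the initial error are small.

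For each $x_0 \in \tilde{\Omega}_2$ with $\arg x_0 = \theta$, I would parametrize the segment from $0$ to $x_0$ as $x(s) = s e^{i\theta}$, $s \in [0,|x_0|]$, and set $u_\theta(s) := \delta_4(s e^{i\theta})$. Differentiating twice in $s$ yields $u_\theta''(s) = e^{2i\theta} F_2(u_\theta(s), x(s))$, and taking moduli gives
\[
|u_\theta''(s)| \leq |u_\theta(s)|\bigl(6|y_b(x(s))|^2 + s\bigr) + 6|y_b(x(s))|\,|u_\theta(s)|^2 + 2|u_\theta(s)|^3 + |R_4(x(s))|.
\]
Remark \ref{polycest} would then be used to bound $|y_b(x)|$ on $\tilde{\Omega}_2$, ideally by a function $M(s)$ of $s = |x|$ rather than a uniform constant, so that the resulting direction-independent majorant $G(t,s)$ --- increasing in $|t|$ --- stays as tight as possible near the origin.

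The remaining step is to build an explicit scalar majorant $y_2(s)$ on $[0,9/4]$ satisfying $y_2(0) > 11\cdot 10^{-4}$, $y_2'(0) \geq 12\cdot 10^{-4}$, and $y_2''(s) \geq G(y_2(s),s)$, while keeping $y_2(9/4) < 6/5$; the differential inequality would be verified rigorously via Remark \ref{polyest}. A direct adaptation of Lemma \ref{lem:estbound} to the radial setting --- obtained by iterating the inequalities $|u_\theta'(s)| \leq |u_\theta'(0)| + \int_0^s |u_\theta''|\,d\tau$ and $|u_\theta(s)| \leq |u_\theta(0)| + \int_0^s |u_\theta'|\,d\tau$ and using local Picard continuation along each ray --- would then give $|\delta_4(s e^{i\theta})| \leq y_2(s) < 6/5$ uniformly in $\theta$, proving the estimate. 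Pole-freeness follows at once, since $y_{HM} = y_b + \delta_4$ is then locally bounded on $\Omega_2$ and is a priori meromorphic on $\mathbb{C}$.

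The main obstacle I anticipate lies in the construction of $y_2$. Although the initial error and $|R_4|$ are both of order $10^{-3}$, the linearized coefficient $6y_b^2 + x$ has modulus of order unity across $\tilde{\Omega}_2$, so a naive Gronwall estimate of the form $|u_\theta| \lesssim \cosh(\sqrt{C}s)\,|u_\theta(0)|$ would easily exceed $6/5$ over a radius of $9/4$. The construction of $y_2$ therefore must exploit cancellations carefully, by letting the pointwise bound $M(s)$ track $|y_b|$ as closely as possible along each ray and by tuning the coefficients of $y_2$ explicitly, in the same spirit as the function $y_1$ built in the proof of Proposition \ref{prop:yhm0}; verifying the resulting polynomial inequality on $[0,9/4]$ via Remark \ref{polyest} will likely require a relatively fine partition.
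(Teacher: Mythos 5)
Your plan coincides with the paper's proof in every essential respect: the same substitution $y_{HM}=y_b+\delta_4$ with initial data from Proposition \ref{prop:yhm0}, the same reduction to radial rays $x=re^{i\theta}$ so that Lemma \ref{lem:estbound} applies directly (no genuine ``adaptation'' is needed), the same use of Remark \ref{polycest} to bound $|y_b|$ and $|6y_b^2+x|$ (the paper takes $6|y_b|<8$ uniformly but lets $|6y_b^2+x|<\frac{12}{5}(r+1)$ grow with $r$, exactly the radially-dependent tightening you anticipate), and an explicit degree-10 polynomial majorant $y_2(r)$ verified by Remark \ref{polyest} on the partition $\{0,1/5,1/2,1,3/2,9/5,2,9/4\}$. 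You correctly identify the crux---constructing $y_2$ so the nonlinear growth stays under $6/5$ over radius $9/4$---but leave it unexecuted; the paper supplies the concrete polynomial and partitions.
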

\begin{proof}
We substitute $y_{HM}=y_{b}+\delta_{4}$ into (\ref{eq:p2}) and get
\begin{equation}
\delta_{4}''(x)=\left(6y_{b}(x)^{2}+x\right)\delta_{4}(x)+6y_{b}(x)\delta_{4}(x)^{2}+2\delta_{4}(x)^{3}-R_{4}(x)=:F_{2}(\delta_{4}(x),x)\label{eq:delta4}
\end{equation}

From (\ref{eq:ybdef}), it is clear that $y_{b}(0)=\frac{98}{267}$ and
$y_{b}'(0)=-\frac{153}{518}$. This, together with Proposition \ref{prop:yhm0}, implies that
\begin{equation}\label{eq:delta40}
|\delta_{4}(0)|<11\cdot10^{-4},\qquad \left|\delta_{4}'(0)\right|<12\cdot10^{-4}.
\end{equation}

Now we will use Lemma \ref{lem:estbound} to estimate $\delta_{4}(x)$
along radial paths $x=re^{i\theta}\:(0\leqslant r\leqslant\frac{9}{4})$
for all fixed $\frac{2\pi}{3}\leqslant\theta\leqslant\pi$. With the
change of variable $\tilde{\delta}_{4}(r)=\delta_{4}(re^{i\theta})$,
equation (\ref{eq:delta4}) becomes
\begin{equation}\label{eq:f2tt}
\tilde{\delta}_{4}''(r)=e^{2i\theta}F_{2}(\tilde{\delta}_{4}(r),re^{i\theta})=:\tilde{F}_{2}(\tilde{\delta}_{4}(r),r).
\end{equation}

To find a suitable $G$ required by Lemma \ref{lem:estbound}, we
first estimate relevant terms in $\tilde{F}_{2}$. We take the same
partition $\{0,1/2,14/15,1\}$ for both $\Sigma_{1}$ and $\Sigma_{2}$.
Using Remark \ref{polycest} we see that
\begin{equation}
|y_{b}(x)|^{2}<\frac{83}{50}\textrm{ on }\Sigma_{1,2}\Rightarrow6|y_{b}(x)|<8\textrm{ on }\tilde{\Omega}_{2}\label{eq:yb2}
\end{equation}

Similarly, we take the same partition $\{0,1/2,3/4,1\}$ for both
$\Sigma_{1}$ and $\Sigma_{2}$ and use Remark \ref{polycest}, which
gives
\begin{multline}\label{eq:yb1}
\left|6y_{b}(x)^{2}+x\right|^{2}-\frac{144}{25}|x-1|^{2}<0\textrm{ on }\Sigma_{1,2} \\
\Rightarrow\left|\frac{6y_{b}(x)^{2}+x}{x-1}\right|<\frac{12}{5}\textrm{ on }\tilde{\Omega}_{2}\Rightarrow|6y_{b}(x)^{2}+x|<\frac{12}{5}|x-1|\textrm{ on }\tilde{\Omega}_{2}.
\end{multline}

Now we define
\[
G_{2}(t,r)=\frac{12}{5}(r+1)|t|+8|t|^{2}+2|t|^{3}+\frac{3}{500}.
\]
Obviously $G_{2}(t,r)$ is increasing in $|t|$. By (\ref{eq:delta4}),
(\ref{eq:f2tt}), (\ref{eq:yb2}) and (\ref{eq:yb1}), it is clear
that $G_{2}(t,r)\geqslant|\tilde{F}_{2}(t,r)|$. Let
\begin{multline}
y_{2}(r)=\frac{400t^{10}}{11977}+\frac{265t^{9}}{11857}-\frac{855t^{8}}{10951}-\frac{149t^{7}}{6608}
\\
+\frac{293t^{6}}{2551}+\frac{1013t^{5}}{14669}
+\frac{128t^{4}}{6441}+\frac{424t^{3}}{6079}+\frac{1261t^{2}}{13159}+\frac{549t}{7508}+\frac{267}{9871},
\end{multline}
where $t=r-1$.

By direct calculation and (\ref{eq:delta40},) we have $$y_{2}(0)>11\cdot10^{-4}>\left|\tilde{\delta}_{4}(0)\right| \quad \textrm{
and} \quad y_{2}'(0)>12\cdot10^{-4}>\left|\tilde{\delta}_{4}'(0)\right|.$$

Since $y_{2}''(r)-G_{2}(y_{2}(r),r)$ is a polynomial, we take the partition
$\{0,1/5,1/2,1,3/2,9/5,2,9/4\}$ and use Remark \ref{polyest} to
get
\[
y_{2}''(r)-G_{2}(y_{2}(r),r)>0 \textrm{ for } r\in[0,9/4].
\]

By Lemma \ref{lem:estbound}, we have $\left|\delta_{4}(r)\right|=\left|\tilde{\delta}_{4}(r)\right|\leqslant y_{2}(r)$
for $r\in[0,9/4]$. By taking the partition $\{0,1,9/4\}$ and using
Remark \ref{polyest}, we see that $y_{2}(r)<6/5$. Thus $|y_{HM}-y_{b}|<6/5$.
\end{proof}

\section{Proofs of Theorems \ref{main} and \ref{main-1}}\label{sec:proofs}
\begin{proof}[Proof of Theorem \ref{main}]
The theorem simply follows from Proposition \ref{prop:farleft}
and Proposition \ref{prop:left0}. Note that $\Omega\subseteq\Omega_{0}\cup\Omega_{2}$,
since $\frac{3^{4/3}}{2}<\frac{9}{4}$.\end{proof}

\begin{proof}[Proof of Theorem \ref{main-1}]
By the argument at the beginning of Section \ref{sec:about} and Theorem \ref{main}, it follows that
$y_{HM}$ is pole-free in the sector $\arg x \in [\frac{2\pi}{3},\frac{ 4 \pi}{3}]$. For the region $\arg x\in [-\pi/3,\pi/3]$, it is already known that $y_{HM}$ is pole-free (see \cite{bert}).
\end{proof}

\begin{rem} Our method also applies in the region $\arg x\in [-\pi/3,\pi/3]$. In fact, we only need to change the definition of $\mathcal{S}_{3}$ to include the sector $[-\pi/3,\pi/3]$ in complex plane in Proposition \ref{prop:delta2}, and to construct a quasi-solution similar to $y_b$ in \eqref{eq:ybdef} for $\{x\in\mathbb{C}:|x|\leqslant 3,-\pi/3 \leqslant\arg x\leqslant\pi/3\}$. The details are left to the interested readers.

Furthermore, we would like to mention that one can also adopt the method to
prove other cases of Conjecture \ref{conj}. However, since there are infinitely many $2$-truncated solutions of PI and PII with significantly different asymptotic behaviors, it is not clear to us at the moment how to find a uniform proof that works for the general case.

\end{rem}


\section*{Acknowledgements}
We would like to express our sincere gratitude and appreciation to Roderick Wong and Dan Dai for organizing a series of conferences and seminars, which facilitated our collaboration on this project, as well as other related discussions.

Min Huang was supported by the Early Career Scheme 21300114 of Research Grants Council of Hong Kong. Shuai-Xia Xu was supported in part by the National Natural Science Foundation of China under grant number 11201493, GuangDong Natural Science Foundation under grant number S2012040007824, Postdoctoral Science Foundation of China under Grant No.2012M521638, and the Fundamental Research Funds for the Central Universities under grant number 13lgpy41. Lun Zhang was partially supported by The Program for Professor of Special Appointment (Eastern Scholar) at Shanghai Institutions of Higher Learning (No. SHH1411007) and by Grant SGST 12DZ 2272800, EZH1411513 from Fudan University.

\end{document}